\documentclass[12pt,a4paper]{amsart}
\makeatletter
\@namedef{subjclassname@1991}{2020 Mathematics Subject Classification}
\makeatother

\usepackage{amssymb}
\usepackage{hyperref}
\usepackage{amsfonts}
\usepackage{amsthm}
\usepackage{amsmath}
\usepackage{amscd}
\usepackage[latin2]{inputenc}
\usepackage{t1enc}
\usepackage[mathscr]{eucal}
\usepackage{indentfirst}
\usepackage{graphicx}
\usepackage{graphics}
\usepackage{pict2e}
\usepackage{epic}
\numberwithin{equation}{section}
\usepackage[margin=2.9cm]{geometry}
\usepackage{epstopdf}

\theoremstyle{plain}
\newtheorem{thm}{Theorem}[section]
\newtheorem{lem}[thm]{Lemma}
\newtheorem{cor}[thm]{Corollary}
\newtheorem{prop}[thm]{Proposition}

\theoremstyle{definition}
\newtheorem{defn}[thm]{Definition}

\newtheorem{rem}[thm]{Remark}
\newtheorem{?}[thm]{Problem}

\theoremstyle{definition}
\newtheorem*{nt*}{Notation}

\theoremstyle{plain}

\newcommand{\re}{{\rm Re}\,}

\newcommand{\inner}[2]{\left\langle {#1}, {#2} \right\rangle}

\newcommand{\calk}{\mathcal K}
\newcommand{\calc}{\mathcal C}

\newcommand {\C} {\mathbb C}
\newcommand {\R} {\mathbb R}

\newcommand {\bft} {\mathbf{t}}

\newcommand {\tha} {\theta}
\newcommand {\lbd} {\lambda}
\newcommand {\vap} {\varphi}
\newcommand {\al} {\alpha}
\newcommand {\be} {\beta}

\newcommand {\spc} {\mathcal{F}^2(\mathbb{C}^d)}

\newcommand {\cj} {\mathcal{C}_{A,b,c}}

\begin{document}
\title[Complex symmetry in the Fock space]{Complex symmetry in the Fock space of several variables}

\author{Pham Viet Hai}%
\address[P.V. Hai]{Faculty of Mathematics and Informatics, Hanoi University of Science and Technology, Khoa Toan-Tin, Dai hoc Bach khoa Hanoi, 1 Dai Co Viet, Hanoi, Vietnam.}%
\email{hai.phamviet@hust.edu.vn}

\author{Pham Trong Tien}
\address[P.T. Tien]{Faculty of Mathematics, Mechanics and Informatics, VNU University of Science, Vietnam National University, Hanoi}%
\email{phamtien@vnu.edu.vn}

\subjclass{Primary: 47B33. Secondary: 47B32, 30D15}

\keywords{Fock space, Unbounded weighted composition operators, Semigroup, Complex symmetry, Real symmetry} 

\begin{abstract}
In this paper we study the complex symmetry in the several variable Fock space by using the techniques of weighted composition operators and semigroups. We characterize unbounded weighted composition operators that are (real) complex symmetric with respect to a concrete conjugation. Using this characterization, we study complex symmetric semigroups and their generators. We realize such generators as first-order differential operators.
\end{abstract}

\maketitle

\section{Introduction}
Let $\C^d$ be the $d$-dimensional  complex Euclidean space. The \emph{Fock space} $\spc$ consists of all entire functions on $\C^d$ for which
$$
\|f\| := \left(\frac{1}{\pi^d}\int_{\C^d}|f(z)|^2 e^{-|z|^2}\;dV(z)\right)^{1/2} < \infty,
$$
where $dV$ is the Lebesgue measure on $\C^d$. It is a reproducing kernel Hilbert space with the inner product
$$
\displaystyle\langle f,g\rangle := \frac{1}{\pi^d}\int_{\C^d} f(z)\overline{g(z)}e^{-|z|^2}\;dV(z)
$$
and \emph{kernel functions} $K_z(u):=e^{\langle u,z \rangle}, u, z \in \C^d$. 

For a nonzero entire function $\psi$ on $\C^d$ and a holomorphic self-mapping $\varphi$ of $\C^d$, the formal \emph{weighted composition expression} is defined by
$$
E_{\psi,\vap}(f) :=\psi\cdot f\circ\vap.
$$
Bounded and compact weighted composition operators on $\spc$ induced by the expression $E_{\psi,\vap}$ were firstly  characterized by Ueki \cite{zbMATH05131441} in terms of a certain integral transform, which is quite difficult to use. Later, Le \cite{zbMATH06324457} gave much easier characterizations for the boundedness and compactness of these operators on $\mathcal F^2(\C)$. Recently, Khoi and Tien \cite{zbMATH07261550} extended Le's results to the operators acting between Fock spaces. For bounded weighted composition operators, normality and isometry on the Fock spaces $\mathcal F^2(\C)$ and $\spc$ were investigated by Le in \cite{zbMATH06324457} and, respectively, by Zhao in \cite{zbMATH06674963, zbMATH06787249}; and the other problems such as essential norm, compact differences, topological structure were studied in \cite{zbMATH07686498, zbMATH07438387, zbMATH07261550}.

In this paper we are interested in unbounded weighted composition operators arising from the expression $E_{\psi,\vap}$ on the Fock space $\spc$. 
For the reader's convenience, we present some notations of unbounded operators.  For an operator $T$, the symbol $\text{dom}(T)$ stands for the domain and $T^*$ means the adjoint. In particular, for a matrix $M$, the adjoint $M^*=\overline{M}^{\bft}$, where  $M^{\bft}$ is the transpose matrix and $\overline{M}$ is the conjugate matrix. The inclusion $T \preceq S$ is understood that $\text{dom}(T)\subseteq\text{dom}(S)$ and $T(f)=S(f)$ for $f\in\text{dom}(T)$. Writing $TS$ is meant an operator that acts on each $f\in\text{dom}(TS)$ by the rule $TS(f) :=T(S(f))$, where the domain $\text{dom}(TS)=\{f\in\text{dom}(S):Sf\in\text{dom}(T)\}.$ 
The \emph{maximal weighted composition operator} on $\spc$ corresponding to $E_{\psi,\vap}$ is defined as follows:
\begin{gather*}
    \text{dom}(C_{\psi,\vap,\max}):=\left\{f\in\spc:E_{\psi,\vap}(f)\in\spc\right\},\\
    C_{\psi,\vap,\max}(f):=E_{\psi,\vap}(f), \ f \in \text{dom}(C_{\psi,\vap,\max}).
\end{gather*}
The operator $C_{\psi,\vap}$ is called a \emph{weighted composition operator} on $\spc$ if $C_{\psi,\vap}\preceq C_{\psi,\vap,\max}$. The operator $C_{\psi,\vap}$ is called \emph{bounded} if $\text{dom}(C_{\psi,\vap})=\spc$ and the ratio of the norm of $C_{\psi,\vap}(f)$ to that of $f$ is bounded by the same number, over all nonzero functions $f$ in $\spc$. By the Closed Graph Theorem, the operator $C_{\psi,\vap,\max}$ is bounded on $\spc$ if $\text{dom}(C_{\psi,\vap,\max})=\spc$, and in this case, by \cite[Proposition 3.1]{zbMATH07261550}, the symbol $\varphi$ must be an affine mapping, more precisely, $\varphi(z) = Qz + q$ with a $d \times d$ matrix $Q$ such that $\|Q\| \leq 1$ and a vector $q$ in $\C^d$.

Recently, there has been a surge of interest in the problem of classifying weighted composition operators that are complex symmetric, posed by Garcia and Hammond \cite{zbMATH06454968}. The problem is studied in various spaces: \cite{zbMATH06320823, zbMATH07053055} treat for Hardy space, \cite{zbMATH07164446} is for Dirichlet spaces, \cite{zbMATH07239348} for Bergman spaces, \cite{zbMATH07190521, zbMATH07592496} for Newton space and finally \cite{zbMATH07216720, zbMATH06857361} for Fock space. 

The aim of this paper is to investigate the complex symmetry in the Fock space $\spc$ by using techniques of weighted composition operators and semigroups. In details, we extend the results in \cite{zbMATH07216720} for unbounded weighted composition operators in the one variable Fock space and in \cite{zbMATH06857361} for bounded weighted composition operators on the several variable Fock space. Moreover, we also apply the obtained results to study the symmetric property of a semigroup of such unbounded operators. 

The first question is to characterize real symmetric (i.e. selfadjoint) and skew-real symmetric  weighted composition operators. Recall that a linear operator $S$ on a Hilbert space $\mathcal H$ is called \emph{real symmetric} (or, \emph{skew-real symmetric}) if $S=S^*$ (or, $S = -S^*$, respectively).
\begin{thm}\label{thm-rs}
Let $\psi$ be a nonzero entire function on $\C^d$ and $\varphi$ a holomorphic self-mapping of $\C^d$.
\begin{itemize}
\item[(a)] The operator $C_{\psi,\vap,\max}$ is real symmetric (or, skew-real symmetric) if and only if the symbols $\psi$ and $\vap$ are of the forms
\begin{gather}\label{eq-rs}
     \psi(z)=\tha e^{\inner{z}{q}}, \ \vap(z)=Qz+q, 
\end{gather}
with 
\begin{equation}\label{eq-cd-rs}
Q=Q^*, \ \ \
q\in\C^d, \ \ \ \tha\in\R \ (\text{or}, \tha\in i\R, \text{ respectively}).
\end{equation}
\item[(b)] The operator $C_{\psi,\vap}$ is (skew-)real symmetric if and only if $C_{\psi,\vap} = C_{\psi,\vap,\max}$ and 
 the symbols $\psi$ and $\varphi$ are of the forms \eqref{eq-rs} with the conditions \eqref{eq-cd-rs}.
\end{itemize}
\end{thm}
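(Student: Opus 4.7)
The plan is to exploit the reproducing kernel structure of $\spc$. The key computation, valid whenever $C_{\psi,\vap,\max}$ is densely defined, is the dual formula $C_{\psi,\vap,\max}^{*}K_{z}=\overline{\psi(z)}\,K_{\vap(z)}$, obtained from $\inner{C_{\psi,\vap,\max}f}{K_{z}}=\psi(z)f(\vap(z))$; combined with $C_{\psi,\vap,\max}(K_{z})(u)=\psi(u)\,e^{\inner{\vap(u)}{z}}$, this reduces the whole theorem to a functional equation in two complex variables.

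For necessity in (a), the assumption $C_{\psi,\vap,\max}=\pm C_{\psi,\vap,\max}^{*}$ in particular places every $K_{z}$ in $\text{dom}(C_{\psi,\vap,\max})$ and equates the two expressions above, yielding
\[
  \psi(u)\,e^{\inner{\vap(u)}{z}}=\pm\,\overline{\psi(z)}\,e^{\inner{u}{\vap(z)}}\qquad (u,z\in\C^{d}).
\]
Setting $z=0$ recovers $\psi(u)=\tha\,e^{\inner{u}{q}}$ with $q:=\vap(0)$ and $\tha:=\pm\overline{\psi(0)}$; then $u=0$ forces $\tha=\pm\overline{\tha}$, pinning $\tha$ to $\R$ or $i\R$. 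Substituting back produces the symmetric identity $\inner{\vap(u)-q}{z}=\inner{u}{\vap(z)-q}$, and the $\C$-linearity of the right-hand side in $u$ forces $\vap(u)-q$ to be $\C$-linear in $u$, so $\vap(u)=Qu+q$; the identity then becomes $\inner{Qu}{z}=\inner{u}{Qz}$, i.e., $Q=Q^{*}$.

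For sufficiency in (a), the explicit forms render the functional equation identically true, so $C_{\psi,\vap,\max}$ and $\pm C_{\psi,\vap,\max}^{*}$ agree on $\Span\{K_{z}:z\in\C^{d}\}$. To promote this to operator equality, I plan to identify $C_{\psi,\vap,\max}^{*}$ explicitly as the maximal weighted composition operator with symbols $(\overline{\tha}\,e^{\inner{\cdot}{q}},\,Q^{*}(\cdot)+q)$; under $Q=Q^{*}$ and $\tha\in\R$ (or $i\R$) these reduce to $(\psi,\vap)$ and deliver $C_{\psi,\vap,\max}^{*}=\pm C_{\psi,\vap,\max}$. This identification of the adjoint is where I expect the main obstacle to be: a priori $\text{dom}(C_{\psi,\vap,\max}^{*})$ could be strictly larger than $\text{dom}(C_{\psi,\vap,\max})$, so one needs a density-and-closedness argument—using the density of polynomials, the closedness of both operators (guaranteed because norm convergence in $\spc$ implies uniform convergence on compacta), and the reproducing property—to secure equality of the two domains.

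Part (b) follows by a sandwich argument. From $C_{\psi,\vap}\preceq C_{\psi,\vap,\max}$ and the (skew-)symmetry assumption $C_{\psi,\vap}^{*}=\pm C_{\psi,\vap}$, passing to adjoints yields $C_{\psi,\vap,\max}^{*}\subseteq C_{\psi,\vap}^{*}=\pm C_{\psi,\vap}\preceq C_{\psi,\vap,\max}$. In particular every $K_{z}\in\text{dom}(C_{\psi,\vap,\max}^{*})$ lies in $\text{dom}(C_{\psi,\vap})$ and satisfies the same functional equation as in part (a), forcing the stated forms of $\psi$ and $\vap$. Part (a) then supplies $C_{\psi,\vap,\max}^{*}=\pm C_{\psi,\vap,\max}$, collapsing the sandwich to $C_{\psi,\vap,\max}\preceq C_{\psi,\vap}\preceq C_{\psi,\vap,\max}$ and hence $C_{\psi,\vap}=C_{\psi,\vap,\max}$.
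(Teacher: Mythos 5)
Your necessity argument and your part (b) sandwich are essentially the paper's: the identity $C_{\psi,\vap,\max}^{*}K_{z}=\overline{\psi(z)}K_{\vap(z)}$ together with $K_z\in\text{dom}(C_{\psi,\vap,\max})$ yields the functional equation $\psi(u)e^{\inner{\vap(u)}{z}}=\pm\overline{\psi(z)}e^{\inner{u}{\vap(z)}}$, from which the forms \eqref{eq-rs}--\eqref{eq-cd-rs} follow exactly as you describe (the paper differentiates where you invoke linearity in $u$, an immaterial difference), and the reduction of (b) to (a) via $C_{\psi,\vap,\max}^{*}\preceq C_{\psi,\vap}^{*}=\pm C_{\psi,\vap}\preceq C_{\psi,\vap,\max}$ is verbatim the paper's argument.

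The gap is in the sufficiency of (a), at precisely the point you flag as ``the main obstacle.'' Agreement of $C_{\psi,\vap,\max}$ and $\pm C_{\psi,\vap,\max}^{*}$ on $\Span\{K_z\}$ gives only the easy inclusion $C_{\psi,\vap,\max}^{*}\preceq C_{\widehat\psi,\widehat\vap,\max}$ (for $f\in\text{dom}(C^{*})$ one computes $C^{*}f(z)=\inner{f}{CK_z}$ pointwise); the hard direction is $C_{\widehat\psi,\widehat\vap,\max}\preceq C_{\psi,\vap,\max}^{*}$, i.e.\ the integral identity $\inner{C_{\psi,\vap,\max}f}{g}=\inner{f}{C_{\widehat\psi,\widehat\vap,\max}g}$ for \emph{all} $f$ in the maximal domain and $g$ in the other maximal domain. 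Your proposed ``density-and-closedness argument'' does not deliver this: density of polynomials (or kernels) in $\spc$ does not make them a core for the unbounded operator $C_{\psi,\vap,\max}$, and two closed operators agreeing on a common dense subset of their domains can still be distinct proper extensions of that restriction; abstract adjoint calculus ($T\preceq S^{*}\Rightarrow S\preceq T^{*}$) only reproduces the inclusion you already have. The paper closes this by a concrete computation (Proposition \ref{prop202304250835}): since $Q=Q^{*}$, write $Q=UMU^{*}$ with $M$ real diagonal, use the intertwining $C_{\psi,\vap,\max}=C_{U^{*}}C_{\widetilde\psi,\widetilde\vap,\max}C_{U}$ of \eqref{eq-as} to reduce to diagonal symbols, and then verify the adjoint identity by an iterated Gaussian integral that reduces it variable by variable to the known one-variable result; the operator inequality $(TS)^{*}\succeq S^{*}T^{*}$ then transports the equality back. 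Some such explicit argument (or a genuine core theorem for $C_{\psi,\vap,\max}$) is needed; as written, your sufficiency step is a plan rather than a proof.
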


The second question is devoted to the complex symmetry of weighted composition operators on $\spc$ with respect to a certain class of conjugations. Recall that an anti-linear mapping $\mathcal C$ on a Hilbert space $\mathcal H$ is called a \textit{conjugation} if it is both involutive and isometric; and a closed, densely defined linear operator $S: \text{dom}(S) \subset \mathcal H \to \mathcal H$ is called \textit{$\mathcal C$-symmetric} if $S \preceq \mathcal C S^* \mathcal C$ and \textit{$\mathcal C$-selfadjoint} if $S = \mathcal C S^* \mathcal C$.

For a $d \times d$ matrix $A$, a  vector $b$ in $\C^d$ and a complex number $c$, we consider the operator
\begin{gather*}
    \cj(f)(z):=ce^{\inner{z}{\overline{b}}}\overline{f\left(\overline{Az+b}\right)}.
\end{gather*}
By \cite[Theorem 3.13]{zbMATH06857361}, the operator $\cj$ represents a conjugation on $\spc$ if and only if $A, b, c$ satisfy the following conditions:
\begin{gather}\label{eq-cj}
\begin{cases}
A^{-1}=A^*=\overline{A},\\
A\overline{b}+b=0,\\
|c|^2e^{|b|^2}=1.   
\end{cases}
\end{gather}

 The description of $\cj$-selfadjoint weighted composition operators on $\spc$ is given as follows.

\begin{thm}\label{thm-cs}
Let $\cj$ be a conjugation on $\spc$, $\psi$ a nonzero entire function on $\C^d$, and $\varphi$ a holomorphic self-mapping of $\C^d$.
\begin{itemize}
\item[(a)] The operator $C_{\psi,\vap,\max}$ is $\cj$-selfadjoint if and only if the symbols $\psi$ and $\varphi$ are of the forms 
\begin{gather}\label{eq-cs}
\psi(z)=\tha e^{\inner{z}{\ell}},\vap(z)=Qz+q,    
\end{gather}
where
\begin{equation}\label{eq-cd-cs}
(AQ)^{\bft}=AQ, \ \ \ \ell=\overline{Aq}+\overline{b}-Q^*\overline{b}, \ \ \ \tha\in\C, \ \ \ q\in\C^d.
\end{equation}
\item[(b)]
The operator $C_{\psi,\vap}$ is $\cj$-selfadjoint if and only if $C_{\psi,\vap}=C_{\psi,\vap,\max}$ and the symbols $\psi$ and $\varphi$ are of the forms \eqref{eq-cs} with the conditions \eqref{eq-cd-cs}.
\end{itemize}
\end{thm}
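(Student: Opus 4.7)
The plan is to test the condition $C_{\psi,\vap,\max} = \cj C_{\psi,\vap,\max}^*\cj$ on reproducing kernels and read off the symbols from the resulting functional identity. Two ingredients enter repeatedly: every kernel $K_z$ lies in $\text{dom}(C_{\psi,\vap,\max}^*)$ and satisfies $C_{\psi,\vap,\max}^*K_z=\overline{\psi(z)}\,K_{\vap(z)}$; and a short computation, using $A^{-1}=A^*=\overline{A}$ (hence $A^{\bft}=A$) together with $A\overline{b}+b=0$ and $|c|^2e^{|b|^2}=1$, yields
\[
\cj K_z \;=\; c\,e^{(z,b)}\,K_{\overline{Az+b}},
\]
where $(v,w):=\sum_i v_iw_i$ denotes the complex-bilinear pairing on $\C^d$. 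In particular $\cj$ maps kernels to scalar multiples of kernels.

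For the necessity in (a), suppose $C_{\psi,\vap,\max}$ is $\cj$-selfadjoint. Then $\cj K_z\in\text{dom}(C_{\psi,\vap,\max})$ and
\[
C_{\psi,\vap,\max}\,\cj K_z \;=\; \cj\,C_{\psi,\vap,\max}^* K_z \;=\; \psi(z)\,\cj K_{\vap(z)}, \qquad z\in\C^d.
\]
Expanding both sides as entire functions of $u\in\C^d$ yields the identity
\[
\psi(u)\,e^{(A\vap(u),z)+(\vap(u),b)} \;=\; \psi(z)\,e^{(Au,\vap(z))+(\vap(z)-z,b)+(u,b)}.
\]
Specialising $z=0$ expresses $\psi(u)$ as $\psi(0)$ times an explicit exponential in $\vap(u)$; substituting this back and cancelling common factors reduces the identity to
\[
(A\tilde\vap(u),z)\;=\;(Au,\tilde\vap(z)),\qquad u,z\in\C^d,
\]
where $\tilde\vap:=\vap-\vap(0)$. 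Freezing $u$ at standard basis vectors and using $A^{\bft}=A$ together with invertibility of $A$ forces every component of $\tilde\vap$ to be $\C$-linear, so $\vap(z)=Qz+q$ is affine. Plugging this back and comparing bilinear coefficients gives $AQ=Q^{\bft}A=(AQ)^{\bft}$, while the $z=0$ formula for $\psi$ simplifies to $\psi(u)=\theta\,e^{(u,Aq+b-Q^{\bft}b)}=\theta\,e^{\langle u,\ell\rangle}$ with $\ell=\overline{Aq}+\overline{b}-Q^*\overline{b}$ and $\theta=\psi(0)\in\C$, which is exactly \eqref{eq-cs}--\eqref{eq-cd-cs}.

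For the sufficiency in (a), I reverse the calculation: with $\psi,\vap$ in the stated form the displayed functional identity holds for all $u,z$, hence $C_{\psi,\vap,\max}\,\cj K_z=\cj C_{\psi,\vap,\max}^*K_z$ for every $z$; since $\mathrm{span}\{K_z\}$ is dense and both $C_{\psi,\vap,\max}$ and $\cj C_{\psi,\vap,\max}^*\cj$ are closed operators which are defined and agree on this set, this extends to the full operator identity. For part (b), the same kernel-function derivation applies verbatim after noting that $C_{\psi,\vap}\preceq C_{\psi,\vap,\max}$ forces $C_{\psi,\vap,\max}^*\preceq C_{\psi,\vap}^*$, so $C_{\psi,\vap}^*$ acts on kernels by the same formula; hence the symbols must be of the forms \eqref{eq-cs}--\eqref{eq-cd-cs}. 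To upgrade the inclusion to $C_{\psi,\vap}=C_{\psi,\vap,\max}$, I conjugate $C_{\psi,\vap,\max}^*\preceq C_{\psi,\vap}^*$ by $\cj$ and invoke $\cj$-selfadjointness of both operators (the maximal one coming from part (a), just established) to obtain the reverse inclusion $C_{\psi,\vap,\max}\preceq C_{\psi,\vap}$. The main difficulty is the derivation of the affine form of $\vap$ from the functional identity; all subsequent steps are routine coefficient-matching.
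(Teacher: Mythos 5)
Your derivation of the necessity in (a) is essentially the paper's Lemma \ref{lem-202305071534}: evaluate the selfadjointness relation on kernels, specialise $z=0$ and $x=0$ to extract $\psi$, and differentiate the residual bilinear identity to force $\vap$ affine and $(AQ)^{\bft}=AQ$. Your argument for (b) also matches the paper's final inclusion chain. The problem is the sufficiency in (a).

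There you argue that since $C_{\psi,\vap,\max}$ and $\cj C_{\psi,\vap,\max}^{*}\cj$ are closed and agree on the dense set $\mathrm{span}\{K_z\}$, they coincide. This inference is invalid for unbounded operators: two distinct closed operators can perfectly well contain a common dense set in their domains and agree there (any closed operator and any proper closed restriction of it whose domain still contains the kernels give a counterexample). Agreement on a dense set determines a closed operator only if that set is a \emph{core}, and establishing that the kernels form a core here is precisely the content you have skipped. What your kernel computation actually yields is only the inclusion $C_{\psi,\vap,\max}^{*}\preceq\cj C_{\psi,\vap,\max}\cj$ (equivalently $\cj$-symmetry), because the right-hand side is a \emph{maximal} operator, so membership of $C_{\psi,\vap,\max}^{*}f$ in $\spc$ with the right pointwise formula suffices; the reverse inclusion $\cj C_{\psi,\vap,\max}\cj\preceq C_{\psi,\vap,\max}^{*}$ requires showing that every $g$ with $E_{\vartheta,\omega}(g)\in\spc$ satisfies $\inner{C_{\psi,\vap,\max}f}{g}=\inner{f}{E_{\vartheta,\omega}(g)}$ for \emph{all} $f$ in the maximal domain, not just for kernels. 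The paper does this by first proving the exact adjoint formula $C_{\psi,\vap,\max}^{*}=C_{\widehat\psi,\widehat\vap,\max}$ when the linear part is a diagonal matrix (Proposition \ref{prop202304250835}, via a Fubini-type reduction to the one-variable Fock space), and then transporting it to the general symmetric case through the Takagi factorization $AQ=UMU^{\bft}$ and the similarity relations \eqref{eq-as}. Some substitute for that analytic step is indispensable; without it your proof establishes $\cj$-symmetry but not $\cj$-selfadjointness of $C_{\psi,\vap,\max}$, and consequently the appeal to part (a) inside your part (b) argument is also unsupported.
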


The proofs of Theorems \ref{thm-rs} and \ref{thm-cs} are given in Section \ref{sec-wcos}. In particular, we study little more general questions of characterizing two maximal weighted composition operators $C_{\psi,\vap, \max}$ and  $C_{\xi,\phi, \max}$ that satisfy:
\begin{itemize}
\item[(i)] $C_{\psi,\vap, \max}^*\preceq C_{\xi,\phi, \max}$ in Subsection \ref{subsec-rs};
\item[(ii)] $\cj C_{\psi,\vap, \max}^*\cj\preceq C_{\xi,\phi, \max}$ in Subsection \ref{subsec-cs}. 
\end{itemize}
The key step in the study of these questions is to show that the symbols $\varphi$ and $\phi$ must be affine, and hence, unbounded symmetric weighted composition operators $C_{\psi, \varphi, \max}$ must be induced by the symbol  $\varphi(z) = Qz + q$, where $Q$ is a $d \times d$ matrix and $q$ is a vector in $\C^d$. Then, using this and the unitary similarity of $d \times d$ matrices (see, for instance, \cite{zbMATH06125590}), we can deal with the operators induced by the symbol $\varphi(z) = Qz + q$ with a $d \times d$ diagonal matrix $Q$. In Subsection \ref{subsec-csrs}, we also show that the class of $\cj$-selfadjoint unbounded weighted composition operators $C_{\psi, \varphi}$ is quite large by containing the class of real symmetric unbounded operators $C_{\psi, \varphi}$ and the class of normal bounded operators $C_{\psi, \varphi}$.

The third question is to study $\cj$-selfadjoint semigroups of unbounded weighted composition operators on $\spc$. Using Theorem \ref{thm-cs}, we give a characterization of  such semigroups and their generators in Section \ref{sec-sg}.

\begin{thm}\label{thm-cssg}
Let $\cj$ be a conjugation on $\spc$, $(\psi_t)_{t \geq 0}$ a family  of nonzero entire functions  and $(\varphi_t)_{t \geq 0}$ a family  of nonconstant holomorphic self-mappings  of $\C^d$. Then
the family $(C_{\psi_t,\varphi_t})_{t \geq 0}$ is a $\cj$-selfadjoint semigroup if and only if the equality $C_{\psi_t,\varphi_t} = C_{\psi_t,\varphi_t,\max}$ holds for every $t\geq 0$ and the symbols are of the following forms
\begin{gather}\label{eq-csg}
\psi_t(z)=\tha_t e^{\inner{z}{\ell_t}},\quad\vap_t(z)=Q_tz+q_t, 
\end{gather}
with the coefficients of the forms
\begin{gather}
\label{eq-co-csg} 
Q_t=e^{t\Omega}, \quad \ q_t=\int\limits_0^te^{s\Omega}q_\diamond\;ds, \quad \ 
\ell_t=\int\limits_0^te^{s\Omega^*}\ell_\diamond\,ds, \quad \ 
\tha_t=e^{\tha_\diamond t+\int\limits_0^t\inner{q_s}{\ell_\diamond}\,ds},
\end{gather}
where $\Omega$ is a $d \times d$ matrix  and $q_\diamond,\ell_\diamond,\tha_\diamond$ are vectors in $\C^d$ satisfying two conditions
\begin{gather}
\label{eq-cd-csg}
(A\Omega)^{\bft} = A\Omega \qquad \text{ and } \qquad 
\Omega^*\ell_\diamond=\overline{A\Omega q_\diamond}-(\Omega^*)^2\overline{b}.
\end{gather}
Moreover, in this case, the generator $G$ of the semigroup $(C_{\psi_t,\varphi_t})_{t \geq 0}$ is also $\cj$-symmetric and it is defined by \begin{gather*}
    \text{dom}(G)=\bigcup_{\omega\in\R}\{f\in\Sigma_\omega:(\tha_\diamond+\inner{z}{\ell_\diamond})f(z)+\inner{\nabla f(z)}{\Omega z+q_\diamond}\in\Sigma_\omega\},\\
    Gf(z)=(\tha_\diamond+\inner{z}{\ell_\diamond})f(z)+\inner{\nabla f(z)}{\Omega z+q_\diamond},
\end{gather*}
where the definition of the subspace $\Sigma_{\omega}$ is given in Section \ref{sec-sg}. 
\end{thm}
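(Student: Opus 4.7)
The plan is to reduce the problem to Theorem \ref{thm-cs} applied pointwise in $t$, extract the one-parameter matrix/vector dynamics from the semigroup law, and then differentiate at $t=0$ to obtain the generator. In this way the $\cj$-selfadjointness is handled in two layers: pointwise in $t$ for the operators, and infinitesimally for the generator.

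For \emph{necessity}, since each $C_{\psi_t,\vap_t}$ is $\cj$-selfadjoint, Theorem \ref{thm-cs}(b) already forces $C_{\psi_t,\vap_t}=C_{\psi_t,\vap_t,\max}$ together with the representations $\psi_t(z)=\tha_t e^{\inner{z}{\ell_t}}$ and $\vap_t(z)=Q_tz+q_t$ satisfying $(AQ_t)^{\bft}=AQ_t$ and $\ell_t=\overline{Aq_t}+\overline{b}-Q_t^*\overline{b}$ for every $t\geq 0$. Evaluating the semigroup identity $C_{\psi_{t+s},\vap_{t+s}}=C_{\psi_t,\vap_t}C_{\psi_s,\vap_s}$ on reproducing kernels converts it into the coupled system
\[
Q_{t+s}=Q_sQ_t,\quad q_{t+s}=Q_sq_t+q_s,\quad \ell_{t+s}=\ell_t+Q_t^*\ell_s,\quad \tha_{t+s}=\tha_t\tha_s e^{\inner{q_t}{\ell_s}},
\]
with initial data $Q_0=I$, $q_0=0$, $\ell_0=0$, $\tha_0=1$. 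Strong continuity of the semigroup on the dense span of kernel functions propagates to continuity of these finite-dimensional parameters, so the matrix equation forces $Q_t=e^{t\Omega}$ with $\Omega:=Q_0'$. Differentiating the $q$-equation at $s=0$ gives $q_t'=Q_tq_\diamond$ with $q_\diamond:=q_0'$, and likewise $\ell_t'=Q_t^*\ell_\diamond$, yielding the integral formulas in \eqref{eq-co-csg}. The scalar equation for $\tha_t$ differentiates to $\tha_t'/\tha_t=\tha_\diamond+\inner{q_t}{\ell_\diamond}$ with $\tha_\diamond:=\tha_0'$, which integrates to the stated exponential. Finally, differentiating $(AQ_t)^{\bft}=AQ_t$ at $t=0$ gives $(A\Omega)^{\bft}=A\Omega$, while differentiating $\ell_t=\overline{Aq_t}+\overline{b}-Q_t^*\overline{b}$ at $t=0$ gives $\ell_\diamond=\overline{Aq_\diamond}-\Omega^*\overline{b}$; applying $\Omega^*$ to this and exploiting the identity $\Omega^*\overline{A}=\overline{A\Omega}$ (which follows from $(A\Omega)^{\bft}=A\Omega$ together with $A^{\bft}=A$ and $\overline{A}=A^{-1}$) produces the second relation in \eqref{eq-cd-csg}.

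For \emph{sufficiency and the generator}, starting from \eqref{eq-co-csg} and \eqref{eq-cd-csg} I would reverse each step: verify by direct substitution that the resulting $\psi_t, \vap_t$ satisfy the hypotheses of Theorem \ref{thm-cs}(b) at every $t$ (so that $C_{\psi_t,\vap_t,\max}$ is $\cj$-selfadjoint), and then check the four functional identities above, which yield the semigroup law first on the core of kernel functions and then on the full domain by closedness. To identify $G$, I would differentiate $C_{\psi_t,\vap_t}f$ at $t=0$ on such a core; the chain rule, combined with the initial derivatives $\Omega, q_\diamond, \ell_\diamond, \tha_\diamond$, gives
\[
\frac{d}{dt}\Big|_{t=0}C_{\psi_t,\vap_t}f(z)=(\tha_\diamond+\inner{z}{\ell_\diamond})f(z)+\inner{\nabla f(z)}{\Omega z+q_\diamond},
\]
which is precisely the stated action. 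The $\cj$-symmetry of $G$ follows by differentiating the pointwise identity $\cj C_{\psi_t,\vap_t}^*\cj=C_{\psi_t,\vap_t}$ at $t=0$ on the common core.

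The \emph{main obstacle} is the precise identification of $\text{dom}(G)$ as the union over $\omega\in\R$ of those $f\in\Sigma_\omega$ whose image under the first-order differential expression still lies in $\Sigma_\omega$. The formal action of $G$ on a dense core is essentially immediate, but matching the abstract semigroup generator's domain to the explicit growth-controlled family $\{\Sigma_\omega\}_{\omega\in\R}$ introduced in Section \ref{sec-sg} requires showing that every orbit $t\mapsto C_{\psi_t,\vap_t}f$ with $f\in\text{dom}(G)$ lies inside a single $\Sigma_\omega$ and, conversely, that any $f$ from the described union generates such a well-controlled orbit. This careful bookkeeping is where the unboundedness of the operators is ultimately confronted.
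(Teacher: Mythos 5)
Your treatment of the symbol characterization (both directions) follows essentially the same route as the paper: apply Theorem \ref{thm-cs} pointwise in $t$, translate the semigroup law into the coupled functional equations for $Q_t,q_t,\ell_t,\tha_t$ via the reproducing kernels, and differentiate at $t=0$; your derivation of the second condition in \eqref{eq-cd-csg} (differentiate $\ell_t=\overline{Aq_t}+\overline{b}-Q_t^*\overline{b}$ once and apply $\Omega^*$) is equivalent to the paper's double differentiation. That part is fine.

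The genuine gap is in the generator. You correctly obtain the inclusion $G\preceq W$, where $W$ denotes the first-order differential expression on the stated domain, by differentiating the orbit at $t=0$; but you then only \emph{name} the reverse inclusion $W\preceq G$ as ``the main obstacle'' without supplying an argument, and this is precisely the nontrivial half. The paper closes it with a maximality argument: one shows that for suitable $\delta$ the operator $G^\omega-\delta I$ is surjective onto $\spc$ (this comes from restricting the semigroup to the subspace $\mathscr D_\omega$ on which it is a bounded $C_0$-semigroup and invoking the Hille--Yosida generation theorem, so that $\delta$ lies in the resolvent set of $G^\omega$), while $W^\omega-\delta I$ is injective. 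The injectivity is the real work: if $(W^\omega-\delta I)f=0$, one restricts $f$ to the flow lines $t\mapsto\vap_t(x)$, observes that $v(t)=f(\vap_t(x))$ solves a first-order linear ODE, integrates it explicitly using \eqref{eq-co-csg} to conclude $C_{\psi_t,\vap_t}f=e^{\delta t}f$, and then the growth bound $\|C_{\psi_t,\vap_t}f\|_\omega\leq e^{\delta_0 t}\|f\|_\omega$ with $\delta>\delta_0$ forces $f\equiv 0$. Together with $G^\omega\preceq W^\omega$ and the elementary lemma that an injective extension of a surjection (after shifting by $\delta I$) must coincide with it, this yields $\text{dom}(G^\omega)=\text{dom}(W^\omega)$. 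Without some argument of this kind your identification of $\text{dom}(G)$ is unproved. A secondary, smaller issue: differentiating $\cj C_{\psi_t,\vap_t}^*\cj=C_{\psi_t,\vap_t}$ at $t=0$ to get $\cj$-symmetry of $G$ requires justifying that the generator of the adjoint semigroup is the adjoint of the generator in this unbounded setting; the paper delegates exactly this to a cited result rather than treating it as a formal consequence.
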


Some notations, terminologies, and basic facts of the semigroup theory and semigroups induced by weighted composition operators are stated in Subsection \ref{subsec-sg}. Based on these facts and Theorem \ref{thm-cs} we prove Theorem \ref{thm-cssg} in Subsection \ref{subsec-cssg}.

\section{Complex symmetric weighted composition operators}\label{sec-wcos}

In this section we give a characterization of (skew-)real symmetric and $\cj$-selfadjoint unbounded weighted composition operators on the Fock space $\spc$.
To do this, firstly we establish some auxiliary results.

\subsection{Preliminaries}
 Similarly to \cite[Lemma 3.1 and Proposition 3.4]{zbMATH07216720}, we have the following lemma.

\begin{lem}\label{W*Kz-prop}
Let $\psi$ be a nonzero entire function on $\C^d$, and $\varphi$ a holomorphic self-mapping of $\C^d$.
\begin{itemize}
\item[(a)] For each $z\in\C^d$, $K_z\in\text{dom}(C^*_{\psi,\vap})$ and  $C_{\psi,\vap}^*K_z=\overline{\psi(z)}K_{\vap(z)}$.
\item[(b)] The maximal weighted composition operator $C_{\psi,\vap,\max}$ is always closed.
\end{itemize}
\end{lem}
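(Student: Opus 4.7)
The plan is to derive both claims directly from the reproducing kernel structure of $\spc$, so neither part requires anything deep; the whole argument is essentially the several-variable analogue of \cite[Lemma 3.1 and Proposition 3.4]{zbMATH07216720}.

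For part (a), the strategy is to test $C_{\psi,\vap} f$ against $K_z$ using the reproducing property $\inner{h}{K_z}=h(z)$. For any $f\in\text{dom}(C_{\psi,\vap})$, I would compute
$$
\inner{C_{\psi,\vap}f}{K_z}=(C_{\psi,\vap}f)(z)=\psi(z)f(\vap(z))=\psi(z)\inner{f}{K_{\vap(z)}}=\inner{f}{\overline{\psi(z)}K_{\vap(z)}}.
$$
Since the right-hand side is a bounded conjugate-linear functional of $f$ on $\text{dom}(C_{\psi,\vap})$, the definition of the adjoint yields $K_z\in\text{dom}(C^*_{\psi,\vap})$ together with the formula $C^*_{\psi,\vap}K_z=\overline{\psi(z)}K_{\vap(z)}$.

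For part (b), the strategy is to verify closedness directly from the definition. Suppose $(f_n)\subset\text{dom}(C_{\psi,\vap,\max})$ with $f_n\to f$ and $C_{\psi,\vap,\max}f_n=\psi\cdot f_n\circ\vap\to g$, both in $\spc$. The essential input is that norm convergence in $\spc$ forces pointwise convergence: from $h(z)=\inner{h}{K_z}$ and $\norm{K_z}=e^{|z|^2/2}$ one has $|h(z)|\leq \norm{h}\,e^{|z|^2/2}$. Applied at the points $z$ and $\vap(z)$, this gives $f_n(\vap(z))\to f(\vap(z))$ and hence $\psi(z)f_n(\vap(z))\to \psi(z)f(\vap(z))$, while the same principle applied to $\psi\cdot f_n\circ\vap\to g$ gives $\psi(z)f_n(\vap(z))\to g(z)$. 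Matching the pointwise limits forces $g=\psi\cdot f\circ\vap$, and since $g\in\spc$ by hypothesis, we conclude $f\in\text{dom}(C_{\psi,\vap,\max})$ with $C_{\psi,\vap,\max}f=g$.

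There is no genuine obstacle: both assertions rest on the availability of a reproducing kernel at every point of $\C^d$ and on the elementary pointwise bound $|h(z)|\leq\norm{h}\,e^{|z|^2/2}$. The only minor care points are to note in (a) that the computation is valid for \emph{every} $f$ in $\text{dom}(C_{\psi,\vap})$ (which is exactly what the adjoint's domain requires), and in (b) that the two pointwise limits agree simply because strong convergence in $\spc$ is stronger than pointwise convergence.
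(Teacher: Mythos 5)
Your proposal is correct and follows exactly the route the paper intends: the paper states this lemma without proof, deferring to the one-variable versions in \cite{zbMATH07216720}, and those rest on precisely the reproducing-kernel computation $\inner{C_{\psi,\vap}f}{K_z}=\inner{f}{\overline{\psi(z)}K_{\vap(z)}}$ for (a) and the pointwise-evaluation bound $|h(z)|\leq\norm{h}e^{|z|^2/2}$ for (b). The only cosmetic slip is calling $f\mapsto\inner{f}{\overline{\psi(z)}K_{\vap(z)}}$ a \emph{conjugate-linear} functional: with the paper's convention (linearity in the first slot) it is linear in $f$, which is exactly what the definition of $\text{dom}(C^*_{\psi,\vap})$ requires.
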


Given a pair of $d \times d$ unitary matrices $(U,V)$, a nonzero entire function $\psi$ on $\C^d$, and a holomorphic self-mapping $\varphi(z) = Qz + q$ of $\C^d$ with a $d \times d$ matrix $Q$ and a vector $q \in \C^d$, motivated by the approach in \cite{zbMATH07261550}, we define
$$
\widetilde{\psi}(z): = \psi(U^*z) \ \text{ and } \ \widetilde{\varphi}(z): = \widetilde{Q}z + \widetilde{q} \ \text{ where } \ \widetilde{Q}: = V^*QU^*, \ \widetilde{q}: = V^*q.
$$
Similarly to \cite[Proposition 3.4]{zbMATH07261550}, we obtain the following lemma.
\begin{lem} 
 It holds that
\begin{equation}\label{eq-as}
C_{\psi, \varphi, \max} = C_U C_{\widetilde{\psi}, \widetilde{\varphi}, \max}C_V \ \text{ and } \  C_{\widetilde{\psi}, \widetilde{\varphi}, \max} = C_{U^*} C_{\psi, \varphi, \max} C_{V^*}, 
\end{equation}
where, for a $d \times d$ matrix $Q$, the operator $C_Q$ is defined by $C_Q(f)(z): = f(Qz)$. 
\end{lem}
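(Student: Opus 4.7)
The plan is to exploit that $U$ and $V$ are unitary matrices, so that the associated operators $C_U$ and $C_V$ are unitary on $\spc$; once this is in place, the intertwining identity reduces to a direct symbolic verification, and the only real issue will be matching the unbounded domains correctly. First I would record that for any unitary $d\times d$ matrix $U$ the operator $C_U(f)(z):=f(Uz)$ is unitary on $\spc$: the change of variables $w=Uz$ leaves both $|z|^2$ and $dV(z)$ invariant, so $C_U$ is an isometry, and $C_{U^*}$ is plainly its two-sided inverse. Consequently, sandwiching an unbounded operator between $C_U$ and $C_V$ transports domains bijectively and preserves membership in $\spc$.

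Next I would verify the pointwise action. For any entire $f$ on which the right-hand side of \eqref{eq-as} is defined, unwinding the definitions yields
$$
(C_U C_{\widetilde{\psi},\widetilde{\vap},\max} C_V f)(z) \;=\; \widetilde{\psi}(Uz)\cdot f\bigl(V\widetilde{\vap}(Uz)\bigr).
$$
Using $\widetilde{\psi}(Uz)=\psi(U^*Uz)=\psi(z)$ together with
$$
V\widetilde{\vap}(Uz) \;=\; V\bigl(V^*QU^*\cdot Uz + V^*q\bigr) \;=\; Qz+q \;=\; \vap(z),
$$
this collapses to $\psi(z)\,f(\vap(z)) = E_{\psi,\vap}(f)(z)$, so the two actions coincide wherever both are defined.

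It remains to match the maximal domains. Since $C_V$ is a unitary bijection of $\spc$, $f$ lies in $\text{dom}(C_U C_{\widetilde{\psi},\widetilde{\vap},\max}C_V)$ if and only if $C_V f \in \text{dom}(C_{\widetilde{\psi},\widetilde{\vap},\max})$, i.e.\ $E_{\widetilde{\psi},\widetilde{\vap}}(C_V f) \in \spc$; applying the unitary $C_U$, this is in turn equivalent to $E_{\psi,\vap}(f) \in \spc$, which is precisely the defining condition of $\text{dom}(C_{\psi,\vap,\max})$. This establishes the first identity in \eqref{eq-as}; the second then follows by composing on the left with $C_{U^*}=C_U^{-1}$ and on the right with $C_{V^*}=C_V^{-1}$. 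The main — but easily handled — subtlety is exactly this domain matching: working with unbounded operators, one needs the unitarity of $C_U$ and $C_V$ to guarantee that conjugation by them transports the maximal domain neither smaller nor larger than required.
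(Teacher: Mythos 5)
Your proof is correct and is essentially the argument the paper has in mind: the paper omits the proof entirely, citing it as analogous to \cite[Proposition 3.4]{zbMATH07261550}, and that analogue is exactly this conjugation-by-unitary-composition-operators computation. Your handling of the domain issue is the right one — the key point being that $E_{\widetilde{\psi},\widetilde{\varphi}}(C_V f) = C_{U^*}\bigl(E_{\psi,\varphi}(f)\bigr)$ as entire functions and the norm integral is invariant under unitary substitution whether finite or not — so nothing is missing.
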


The next result extends \cite[Theorem 3.7]{zbMATH07216720} to the several variables case and plays an essential role in the current paper. 
 
\begin{prop}\label{prop202304250835}
Let 
\begin{gather*}
\begin{cases}
\psi(z)=\tha e^{\inner{z}{\overline{\lbd}}},\quad\vap(z)= Az + \beta,\\
\widehat{\psi}(z)=\overline{\tha}e^{\inner{z}{\be}},\quad\widehat{\vap}(z)=\overline{A}z+\overline{\lbd},
\end{cases}
\end{gather*}
where $A$ is a $d \times d$ diagonal matrix with diagonal elements $\al_1, ..., \al_d$, and $\beta, \lambda$ are two vectors in $\C^d$, and  $\theta$ is a number in $\C$. 
Then the following holds 
$$
C_{\psi,\vap,\max}^*=C_{\widehat{\psi},\widehat{\vap},\max}.
$$
\end{prop}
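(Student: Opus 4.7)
The plan is to prove the equality of unbounded operators $C_{\psi,\varphi,\max}^{*} = C_{\widehat{\psi},\widehat{\varphi},\max}$ by establishing both inclusions. The crucial structural fact is that $A$ being diagonal gives $A^{*} = \overline{A}$, which lets us rewrite $\widehat{\varphi}(z) = A^{*} z + \overline{\lambda}$. I first carry out the action on reproducing kernels: after combining exponential factors, direct substitution yields
\[
C_{\psi,\varphi,\max} K_w = \theta\, e^{\inner{\beta}{w}}\, K_{A^{*} w + \overline{\lambda}}, \qquad
C_{\widehat{\psi},\widehat{\varphi},\max} K_w = \overline{\theta}\, e^{\inner{\overline{\lambda}}{w}}\, K_{Aw + \beta}.
\]
Thus every kernel function lies in both domains, so both operators are densely defined, and $C_{\widehat{\psi},\widehat{\varphi},\max}$ is closed by Lemma~\ref{W*Kz-prop}(b).

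For the inclusion $C_{\psi,\varphi,\max}^{*} \preceq C_{\widehat{\psi},\widehat{\varphi},\max}$ (the easier direction), I fix $f \in \text{dom}(C_{\psi,\varphi,\max}^{*})$ and put $h := C_{\psi,\varphi,\max}^{*} f \in \spc$. The reproducing property, the adjoint relation applied to $K_w$, and the kernel formula above combine to give
\[
h(w) = \inner{f}{C_{\psi,\varphi,\max} K_w} = \overline{\theta}\, e^{\inner{w}{\beta}}\, f(A^{*} w + \overline{\lambda}) = \widehat{\psi}(w)\, f(\widehat{\varphi}(w)),
\]
so $E_{\widehat{\psi},\widehat{\varphi}}(f) = h \in \spc$, placing $f$ in $\text{dom}(C_{\widehat{\psi},\widehat{\varphi},\max})$ with matching action.

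For the reverse inclusion $C_{\widehat{\psi},\widehat{\varphi},\max} \preceq C_{\psi,\varphi,\max}^{*}$, I plan to verify the sesquilinear identity
\[
\inner{C_{\psi,\varphi,\max} g}{f} = \inner{g}{C_{\widehat{\psi},\widehat{\varphi},\max} f}
\]
for every $g \in \text{dom}(C_{\psi,\varphi,\max})$ and $f \in \text{dom}(C_{\widehat{\psi},\widehat{\varphi},\max})$. On $g = K_w$ the identity is an immediate consequence of the kernel formulas above. The diagonal form of $A$ is then used decisively to extend this to general $g$: $\psi$ factors as $\theta\prod_j e^{\lambda_j z_j}$, $\varphi$ acts coordinate-wise, and the Gaussian weight splits, so Fubini reduces the identity on simple tensors $g = \bigotimes_j g_j$, $f = \bigotimes_j f_j$ to $d$ coordinate-wise copies of the one-variable Theorem~3.7 of \cite{zbMATH07216720}, which is precisely the scalar result being extended here.

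The main technical obstacle is to bridge from kernel functions and simple tensors to an arbitrary element $g$ of $\text{dom}(C_{\psi,\varphi,\max})$, since this dense subspace is not \emph{a priori} a core for the unbounded operator. I would resolve this by noting that, for fixed $f$, the functional $g \mapsto \inner{g}{C_{\widehat{\psi},\widehat{\varphi},\max} f}$ is bounded on all of $\spc$, while $g \mapsto \inner{C_{\psi,\varphi,\max} g}{f}$ is Cauchy--Schwarz continuous in the graph norm of $C_{\psi,\varphi,\max}$; combined with the coordinate-wise reduction and a polynomial-core (Taylor-truncation) argument on each one-dimensional factor, the two functionals agree throughout $\text{dom}(C_{\psi,\varphi,\max})$, completing the proof.
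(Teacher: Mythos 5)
Your computation of the kernel action and your proof of the inclusion $C_{\psi,\vap,\max}^{*}\preceq C_{\widehat{\psi},\widehat{\vap},\max}$ coincide with the paper's argument. The problem is the reverse inclusion. You reduce the sesquilinear identity $\inner{C_{\psi,\vap,\max}g}{f}=\inner{g}{C_{\widehat{\psi},\widehat{\vap},\max}f}$ to the case of simple tensors and then propose to pass to arbitrary $g\in\text{dom}(C_{\psi,\vap,\max})$ by a ``polynomial-core (Taylor-truncation) argument.'' You correctly identify that this requires the simple tensors (or polynomials) to be dense in $\text{dom}(C_{\psi,\vap,\max})$ for the \emph{graph} norm, not merely for the $\spc$-norm; but you never establish this, and it is not a triviality. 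Concretely: if $f_N$ denotes a Taylor truncation of $f$, then $\psi\cdot(f_N\circ\vap)$ is not a truncation of $\psi\cdot(f\circ\vap)$, so there is no obvious reason why $\|E_{\psi,\vap}(f_N)-E_{\psi,\vap}(f)\|\to 0$ for every $f$ in the maximal domain. Your continuity observations (boundedness of $g\mapsto\inner{g}{C_{\widehat{\psi},\widehat{\vap},\max}f}$ and graph-norm continuity of $g\mapsto\inner{C_{\psi,\vap,\max}g}{f}$) only close the argument once this core property is in hand, so as written the proof has a genuine hole at its central step.

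The paper avoids this entirely: it never restricts to tensors. It writes $\inner{C_{\psi,\vap,\max}(f)}{g}$ as an iterated integral over the coordinates (Fubini), freezes $z_2,\dots,z_d$, and invokes \cite[Lemma 2.1]{zbMATH07261550} to guarantee that the one-variable slices $z_1\mapsto f(z_1,z_2,\dots,z_d)$, $z_1\mapsto g(z_1,z_2,\dots,z_d)$ and $z_1\mapsto e^{\lbd_1 z_1}f(\vap(z))$ of \emph{arbitrary} domain elements lie in $\mathcal{F}^2(\C)$; the one-variable adjoint identity \cite[Theorem 3.7]{zbMATH07216720} is then applied to the inner integral and the process is repeated in the remaining variables. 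This is exactly the coordinate-wise reduction you had in mind, but executed on slices of general $f$ and $g$ rather than on tensor products, which is what makes the density/core question moot. To repair your proof, either adopt this slicing argument or supply an actual proof that polynomials form a core for $C_{\psi,\vap,\max}$.
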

\begin{proof}
For each $z\in\C^d$, it can be checked that $E_{\psi,\vap}(K_z)=\tha e^{\inner{\be}{z}}K_{\widehat{\vap}(z)}\in\spc$. Hence,
\begin{gather*}
    K_z\in\text{dom}(C_{\psi,\vap,\max})\,\,\text{and}\,\,C_{\psi,\vap,\max}(K_z)=\tha e^{\inner{\be}{z}}K_{\widehat{\vap}(z)}.
\end{gather*}
First, we prove that
\begin{gather}\label{202304241334}
C_{\psi,\vap,\max}^*\preceq C_{\widehat{\psi},\widehat{\vap},\max}.    
\end{gather}
Indeed, for $f\in\text{dom}(C_{\psi,\vap,\max}^*)$ and $z\in\C^d$, we have $ C_{\psi,\vap,\max}^*(f) \in \spc$ and
\begin{align*}
C_{\psi,\vap,\max}^*(f)(z)&=\inner{C_{\psi,\vap,\max}^*(f)}{K_z}=\inner{f}{C_{\psi,\vap,\max}(K_z)}\\
&=\inner{f}{\tha e^{\inner{\be}{z}}K_{\widehat{\vap}(z)}}=\widehat{\psi}(z)\inner{f}{K_{\widehat{\vap}(z)}}=\widehat{\psi}(z)f(\widehat{\vap}(z))=E_{\widehat{\psi},\widehat{\vap}}(f)(z),
\end{align*}
which implies that \begin{gather*}
f\in\text{dom}(C_{\widehat{\psi},\widehat{\vap},\max}) \ \text{ and } \ 
C_{\widehat{\psi},\widehat{\vap},\max}(f)=E_{\widehat{\psi},\widehat{\vap}}(f)=C_{\psi,\vap,\max}^*(f).
\end{gather*}
Thus, \eqref{202304241334} holds.

Next is to prove the inverse inclusion of \eqref{202304241334}, that is equivalent to proving that
\begin{gather}\label{202304241344}
\inner{C_{\psi,\vap,\max}(f)}{g}=\inner{f}{C_{\widehat{\psi},\widehat{\vap},\max}(g)}\ \text{ for all } f\in\text{dom}(C_{\psi,\vap,\max}) \text{ and } g\in\text{dom}(C_{\widehat{\psi},\widehat{\vap},\max}).
\end{gather}
We use \cite[Lemma 2.1]{zbMATH07261550} and \cite[Theorem 3.7]{zbMATH07216720} to prove \eqref{202304241344} by reducing it to the case of one variable. To that aim, we express
\begin{align*}
& \inner{C_{\psi,\vap,\max}(f)}{g}\\
=&\; \frac{\tha}{\pi}\int_\C e^{\lbd_dz_d-|z_d|^2}dV(z_d)\cdots\frac{1}{\pi}\int_\C e^{\lbd_2z_2-|z_2|^2}dV(z_2)\frac{1}{\pi}\int_\C e^{\lbd_1z_1}f(\vap(z))\overline{g(z)}e^{-|z_1|^2}dV(z_1).
\end{align*}
Fix $z_2,\cdots,z_d$. By \cite[Lemma 2.1]{zbMATH07261550}, the functions
\begin{gather*}
z_1\mapsto f(z_1,z_2,\cdots,z_n),\ \  
z_1\mapsto g(z_1,z_2,\cdots,z_n),\ \  
z_1\mapsto e^{\lbd_1z_1}f(\vap(z))
\end{gather*}
belong to the Fock space $\mathcal{F}^2(\mathbb{C})$ of one variable  of $z_1$. Now use \cite[Theorem 3.7]{zbMATH07216720} to get
\begin{align*}
& \frac{1}{\pi}\int_\C e^{\lbd_1z_1}f(\vap(z))\overline{g(z)}e^{-|z_1|^2}dV(z_1)\\
= & \; \frac{1}{\pi}\int_\C f(z_1,\al_2z_2+\be_2,\cdots,\al_dz_d+\be_d)\overline{e^{\overline{\be_1}z_1} g(\overline{\al_1}z_1+\overline{\lbd_1},z_2,\cdots,z_d)}e^{-|z_1|^2}dV(z_1).
\end{align*}
Through substituting the equality above back into the inner product $\inner{C_{\psi,\vap,\max}(f)}{g}$ and then repeating the arguments to the remaining variables, we get \eqref{202304241344}.
\end{proof}

\subsection{(Skew-)Real symmetry} \label{subsec-rs}
In the subsection, we give the proof of Theorem \ref{thm-rs}. In fact, we do a little more by considering the problem of characterizing two maximal weighted composition operators $C_{\psi,\vap, \max}$ and  $C_{\xi,\phi, \max}$ that satisfy
\begin{gather*}
    C_{\psi,\vap, \max}^*\preceq C_{\xi,\phi, \max}.
\end{gather*}
The problem was given by Le \cite{zbMATH06074411} who considered bounded weighted composition operators on Hardy spaces. Then the authors \cite{zbMATH07562075} studied the problem for bounded composition operators on Hardy-Smirnov spaces.

\begin{lem}\label{lem202304242020}
Let $\psi, \xi$ be nonzero entire functions on $\C^d$ and $\varphi, \phi$ holomorphic self-mappings of $\C^d$. The following conditions are equivalent:
\begin{itemize}
    \item[(i)] $C^*_{\psi,\varphi, \max}K_z = C_{\xi,\phi, max}K_z$ for every $z \in \C^d$;
    \item[(ii)] $C^*_{\xi,\phi, \max}K_z = C_{\psi,\varphi, \max}K_z$ for every $z \in \C^d$;
    \item[(iii)] $\psi, \xi$ and $\varphi, \phi$ are of the forms \begin{gather}\label{202304241738abc}
\begin{cases}
\psi(z)=\tha e^{\inner{z}{p}},\vap(z)=Qz+q,\\
\xi(z)=\overline{\tha}e^{\inner{z}{q}},\phi(z)=Q^*z+p,
\end{cases}
\end{gather}    
where $Q$ is a $d \times d$ matrix, $p, q \in \C^d$ and $\theta \in \C$.
\end{itemize}
\end{lem}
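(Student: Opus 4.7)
The plan is to prove the equivalence (i)$\Leftrightarrow$(iii) directly; the equivalence (ii)$\Leftrightarrow$(iii) then follows by symmetry, since the formulas in (iii) are invariant under the simultaneous swap $(\psi,\varphi)\leftrightarrow(\xi,\phi)$ together with the relabeling $(\tha,p,q,Q)\mapsto(\overline{\tha},q,p,Q^*)$, and under this swap condition (i) becomes condition (ii).

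For the direction (iii)$\Rightarrow$(i), I would simply compute both sides. Using Lemma \ref{W*Kz-prop}(a), $C^*_{\psi,\varphi,\max}K_z = \overline{\tha}\,e^{\inner{p}{z}}K_{Qz+q}$. On the other hand, directly, $(C_{\xi,\phi,\max}K_z)(w) = \xi(w)\,e^{\inner{\phi(w)}{z}} = \overline{\tha}\,e^{\inner{w}{q}}\,e^{\inner{Q^*w+p}{z}}$, and the adjoint identity $\inner{Q^*w}{z}=\inner{w}{Qz}$ rearranges this to $\overline{\tha}\,e^{\inner{p}{z}}K_{Qz+q}(w)$, matching the first expression.

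For the harder direction (i)$\Rightarrow$(iii), Lemma \ref{W*Kz-prop}(a) converts the hypothesis, after evaluation at an arbitrary $w\in\C^d$, into the pointwise identity
\[
\overline{\psi(z)}\,e^{\inner{w}{\varphi(z)}} = \xi(w)\,e^{\inner{\phi(w)}{z}}, \qquad w,z\in\C^d.
\]
Specializing $w=0$ gives $\overline{\psi(z)}=\xi(0)\,e^{\inner{\phi(0)}{z}}$, hence $\psi(z)=\tha e^{\inner{z}{p}}$ with $\tha:=\psi(0)\neq 0$ and $p:=\phi(0)$; symmetrically $z=0$ yields $\xi(w)=\overline{\tha}\,e^{\inner{w}{q}}$ with $q:=\varphi(0)$. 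Inserting these forms back and cancelling the nonzero factor $\overline{\tha}$ leaves $e^{\inner{p}{z}+\inner{w}{\varphi(z)}} = e^{\inner{w}{q}+\inner{\phi(w)}{z}}$. The difference of the exponents is a continuous $2\pi i\Z$-valued function vanishing at $(w,z)=(0,0)$, hence identically zero, producing
\[
\inner{p}{z} + \inner{w}{\varphi(z)-q} = \inner{\phi(w)}{z}, \qquad w,z\in\C^d.
\]

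The crux, which I expect to be the main obstacle, is extracting from this bilinear-type identity the affine structure $\varphi(z)=Qz+q$ and $\phi(w)=Q^*w+p$. Fixing $z$, the left-hand side is affine (constant plus $\C$-linear) in $w$; taking $z$ to be a standard basis vector $e_k$ isolates the component $\phi_k(w)$ and forces it to be affine in $w$, so each $\phi_k$ and hence the holomorphic map $\phi$ is affine, and $\phi(0)=p$ gives $\phi(w)=Rw+p$ for some $d\times d$ matrix $R$. Substituting this back reduces the identity to $\inner{Rw}{z}=\inner{w}{\varphi(z)-q}$; repeating the argument with the roles of $z$ and $w$ exchanged and using holomorphy of $\varphi$, setting $w=e_k$ now isolates $\overline{\varphi_k(z)-q_k}$ as antiholomorphic-linear in $z$, forcing $\varphi(z)=Qz+q$ for some matrix $Q$. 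The resulting comparison $\inner{Rw}{z}=\inner{w}{Qz}=\inner{Q^*w}{z}$ of the two bilinear forms then gives $R=Q^*$, completing the identification of the symbols in the form (iii).
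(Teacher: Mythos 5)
Your proposal is correct and follows essentially the same route as the paper: reduce (i) to the pointwise kernel identity $\overline{\psi(z)}K_{\varphi(z)}(w)=\xi(w)K_z(\phi(w))$, specialize at $w=0$ and $z=0$ to extract the exponential weights, and then use the resulting bilinear identity to force both symbols to be affine. The only cosmetic differences are the order in which $\varphi$ and $\phi$ are shown to be affine (the paper differentiates in the $x_j$ to get $\varphi$ first, you plug in basis vectors to get $\phi$ first) and your slightly more careful handling of the $2\pi i\mathbb{Z}$ ambiguity when cancelling exponentials.
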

\begin{proof}
It is sufficient to prove the equivalence of (i) and (iii). The one of (ii) and (iii) is similar.

Suppose that (i) holds. Using Lemma \ref{W*Kz-prop}, we can check that (i)
is equivalent to that
\begin{gather}\label{202304241534}
    \overline{\psi(z)}K_{\vap(z)}(x)=\xi(x)K_z(\phi(x))\ \text{ for all } \ x,z\in\C^d.
\end{gather}

Letting $z=0$ in \eqref{202304241534}, we obtain $\xi(x)=\overline{\psi(0)}K_{\vap(0)}(x)$, which gives the form of $\xi$ in \eqref{202304241738abc}, where $\tha=\psi(0)$ and $q=\vap(0).$ Now substituting the form back into \eqref{202304241534}, we get
\begin{gather}\label{202304241723}
\overline{\psi(z)}e^{\inner{x}{\vap(z)}}=\overline{\tha}e^{\inner{x}{q}+\inner{\phi(x)}{z}}.
\end{gather}
Throughout letting $x=0$ and then taking the conjugate of both sides of a complex equation, we get the form of $\psi$ in \eqref{202304241738abc}, where $p=\phi(0)$. Now \eqref{202304241723} is reduced to the following
\begin{gather*}
    \inner{p}{z}+\inner{x}{\vap(z)}=\inner{x}{q}+\inner{\phi(x)}{z}
\end{gather*}
or equivalently to saying that
\begin{gather}\label{202307101713}
\inner{x}{\vap(z)-q}=\inner{\phi(x)-p}{z}. 
\end{gather}
For each $j\in\{1,2,\cdots,d\}$, we differentiate \eqref{202307101713} with respect to the variable $x_j$ and then
\begin{gather*}
\overline{\vap_j(z)}-\overline{q_j}=\sum_{k=1}^d\frac{\partial\phi_k(x)}{\partial x_j}\overline{z_k}.
\end{gather*}
Hence, we get the form of $\vap$ in \eqref{202304241738abc}. Inserting $\vap$ into \eqref{202307101713}, we get the form of $\phi$ in \eqref{202304241738abc}.

Conversely, suppose that (iii) holds. For each $z, x \in \C^d$, by Lemma \ref{W*Kz-prop},
$$
C^*_{\psi, \varphi, \max}K_z(x) = \overline{\psi(z)} K_{\varphi(z)}(x) = \overline{\theta} e^{\langle p, z\rangle + \langle x, Qz + q \rangle} = 
 \overline{\theta} e^{\langle x,q \rangle + \langle Q^* x + p, z  \rangle} = C_{\xi, \phi, \max}K_z(x),
$$
which implies (i).
\end{proof}

\begin{prop}\label{202307121414}
Let $\psi, \xi$ be nonzero entire functions on $\C^d$ and $\varphi, \phi$ holomorphic self-mappings of $\C^d$.
Then $C_{\psi,\vap,\max}^*\preceq C_{\xi,\phi,\max}$ if and only if the symbols $\psi, \xi$ and $\varphi, \phi$ are of the forms \eqref{202304241738abc}.
If we assume additionally that 
$Q=Q^*$, then $C_{\psi,\vap,\max}^*=C_{\xi,\phi,\max}$.
\end{prop}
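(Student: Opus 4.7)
The plan is to handle the three assertions of the proposition---necessity of the forms, sufficiency for the inclusion, and equality under $Q=Q^*$---in turn. For the forward direction, Lemma \ref{W*Kz-prop}(a) guarantees that each kernel function $K_z$ lies in $\text{dom}(C^*_{\psi,\varphi,\max})$, so the assumed inclusion forces $K_z \in \text{dom}(C_{\xi,\phi,\max})$ with $C^*_{\psi,\varphi,\max}K_z = C_{\xi,\phi,\max}K_z$ for every $z\in\C^d$. This is condition (i) of Lemma \ref{lem202304242020}, and its equivalence with (iii) immediately produces the forms \eqref{202304241738abc}.

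For the converse (sufficiency for the inclusion), I would compute directly from the forms that $E_{\psi,\varphi}(K_z)(u) = \theta e^{\langle u,p\rangle + \langle Qu + q,z\rangle} = \theta e^{\langle q,z\rangle}K_{Q^*z+p}(u)$, so $K_z \in \text{dom}(C_{\psi,\varphi,\max})$ with $C_{\psi,\varphi,\max}K_z = \theta e^{\langle q,z\rangle}K_{Q^*z+p}$. For $f \in \text{dom}(C^*_{\psi,\varphi,\max})$, the reproducing property then yields $C^*_{\psi,\varphi,\max}(f)(z) = \langle f, C_{\psi,\varphi,\max}K_z\rangle = \overline{\theta}e^{\langle z,q\rangle}f(Q^*z+p) = E_{\xi,\phi}(f)(z)$. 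Since the left side lies in $\spc$, so does $E_{\xi,\phi}(f)$; hence $f \in \text{dom}(C_{\xi,\phi,\max})$ and the two operators agree on $f$, establishing $C^*_{\psi,\varphi,\max}\preceq C_{\xi,\phi,\max}$.

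For the final assertion that $Q = Q^*$ upgrades this inclusion to equality, I would reduce to the diagonal case supplied by Proposition \ref{prop202304250835} using the unitary similarity \eqref{eq-as}. Since $Q$ is Hermitian, diagonalize $Q = WDW^*$ with $W$ unitary and $D$ real diagonal, and choose $U := W^*$, $V := W$ in \eqref{eq-as}: then $\widetilde{Q} = V^*QU^* = D$, and the conjugated pair $(\widetilde{\psi},\widetilde{\varphi})$ falls exactly into the template of Proposition \ref{prop202304250835}. Applying the same $(U,V)$ to $(\xi,\phi)$ and using $Q^* = Q$, one verifies that $(\widetilde{\xi},\widetilde{\phi})$ coincides with the $(\widehat{\psi},\widehat{\varphi})$ produced there, so Proposition \ref{prop202304250835} delivers $C^*_{\widetilde{\psi},\widetilde{\varphi},\max} = C_{\widetilde{\xi},\widetilde{\phi},\max}$. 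Since $V^* = U = W^*$ and $U^* = V = W$, the conjugating unitary factors on either side of \eqref{eq-as} coincide, and undoing the similarity yields $C^*_{\psi,\varphi,\max} = C_{\xi,\phi,\max}$. The main obstacle is precisely this last step: the first two parts only give inclusion, and the adjoint-reversing trick applied to the symmetric pair $(\xi,\phi)$ merely reproduces the same inclusion, so the genuine content must come from Proposition \ref{prop202304250835} (whose proof reduces to the one-variable case via Fubini); the remainder is careful bookkeeping to arrange the conjugating unitaries to match on both operators simultaneously.
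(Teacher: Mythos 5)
Your proposal is correct and follows essentially the same route as the paper: kernel functions plus Lemma \ref{lem202304242020} for necessity, the reproducing-kernel computation for the sufficiency of the inclusion, and diagonalization of the Hermitian $Q$ combined with Proposition \ref{prop202304250835} and the unitary similarity \eqref{eq-as} for the final equality. The only difference is at the very last step: rather than asserting that the adjoint passes exactly through the unitary conjugation (``undoing the similarity''), the paper invokes only the one-sided adjoint inclusion $(ST)^*\succeq T^*S^*$ to obtain $C^*_{\psi,\varphi,\max}\succeq C_{\xi,\phi,\max}$ and combines this with the already-established reverse inclusion, thereby avoiding any appeal to equality of adjoints for products involving unbounded operators.
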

\begin{proof}
Let $C_{\psi,\vap,\max}^*\preceq C_{\xi,\phi,\max}$. Then, for each $z\in\C^d$, $K_z\in\text{dom}(C_{\psi,\vap,\max}^*)$ by Lemma \ref{W*Kz-prop}, hence
$$
K_z\in\text{dom}(C_{\xi,\phi,\max}) \ \text{ and } \ 
C_{\psi,\vap,\max}^*K_z=C_{\xi,\phi,\max}K_z,  
$$
which by Lemma \ref{lem202304242020} implies that the symbols $\psi, \xi$ and $\varphi, \phi$ are of the forms \eqref{202304241738abc}.

Conversely, let $\psi, \xi$ and $\varphi, \phi$ of the forms \eqref{202304241738abc}.
For each $f\in\text{dom}(C_{\psi,\vap,\max}^*)$, we have $C_{\psi,\vap,\max}^*(f) \in \spc$ and, by Lemmas \ref{W*Kz-prop} and \ref{lem202304242020},
\begin{align*}
C_{\psi,\vap,\max}^*(f)(z)&=\inner{C_{\psi,\vap,\max}^*(f)}{K_z}=\inner{f}{C_{\psi,\vap,\max} K_z} = \inner{f}{C^*_{\xi,\phi,\max}K_z}\\
&=\inner{f}{\overline{\xi(z)}K_{\phi(z)}}=\xi(z)\inner{f}{K_{\phi(z)}}=\xi(z)f(\phi(z))=E_{\xi,\phi}(f)(z),
\end{align*}
for every $z\in\C^d$, which implies that $f\in\text{dom}(C_{\xi,\phi,\max})$ and
\begin{gather*}
C_{\xi,\phi,\max}(f)=E_{\xi,\phi}(f)=C_{\psi,\vap,\max}^*(f).
\end{gather*}

Now we assume additionally that $Q=Q^*$. Then, by \cite[Theorem 4.1.5]{zbMATH06125590}, $Q=UMU^*$, where $U$ is a unitary matrix and $M$ is a real diagonal matrix. Then by \eqref{eq-as}, we get
$$
C_{\psi,\vap,\max} = C_{U^*}C_{\widetilde{\psi},\widetilde{\vap},\max}C_U,\quad C_{\xi,\phi,\max}=C_{U^*}C_{\widetilde{\xi},\widetilde{\phi},\max}C_U,
$$
where
\begin{gather*}
\begin{cases}
    \widetilde{\vap}(z)=Mz+U^*q, \ 
    \widetilde{\psi}(z)=\psi(Uz) = \tha e^{\inner{z}{U^*p}},\\
    \widetilde{\phi}(z)=Mz+U^*p,\ \widetilde{\xi}(z)=\xi(Uz)=\overline{\tha}e^{\inner{z}{U^*q}}.
\end{cases}
\end{gather*}
Since $M$ is a real diagonal matrix, Proposition \ref{prop202304250835} reveals that $C_{\widetilde{\psi},\widetilde{\vap},\max}^*=C_{\widetilde{\xi},\widetilde{\phi},\max}$. With all preparation in place, we can prove that $C_{\psi,\vap,\max}^*=C_{\xi,\phi,\max}$. Indeed, using \cite[Proposition 1.7]{zbMATH06046473}, we obtain
\begin{align*}
C_{\psi,\vap,\max}^*&=(C_{U^*}C_{\widetilde{\psi},\widetilde{\vap},\max}C_U)^*
=(C_{\widetilde{\psi},\widetilde{\vap},\max}C_U)^*C_U \\
& \succeq C_{U^*}C_{\widetilde{\psi},\widetilde{\vap},\max}^*C_U =C_{U^*}C_{\widetilde{\xi},\widetilde{\phi},\max}C_U=C_{\xi,\phi,\max}.
\end{align*}
The proof is completed.
\end{proof}

Now we use Proposition \ref{202307121414} to prove Theorem \ref{thm-rs}.
\begin{proof}[The proof of Theorem \ref{thm-rs}]
It is enough to prove for the real symmetry. The proof for the skew-real symmetry is similar.

The part (a) is an immediate consequence of Proposition \ref{202307121414} and the converse conclusion of the part (b) directly follows from the part (a). Now we suppose that the operator $C_{\psi,\varphi}$ is real symmetric. Firstly, we show that the operator $C_{\psi,\varphi,\max}$ is real symmetric. Indeed, since $C_{\psi,\varphi}\preceq C_{\psi,\varphi,\max}$, by \cite[Proposition 1.6]{zbMATH06046473}, we have
$$
C_{\psi,\varphi,\max}^*\preceq C_{\psi,\varphi}^*=C_{\psi,\varphi}\preceq C_{\psi,\varphi,\max}.
$$
By Proposition \ref{202307121414}, the symbols $\psi$ and $\varphi$ are of the forms \eqref{eq-rs} with the conditions \eqref{eq-cd-rs}, and hence, again by Proposition \ref{202307121414},
the operator $C_{\psi,\varphi,\max}$ is real symmetric. Then,
$$
C_{\psi,\varphi}\preceq C_{\psi,\varphi,\max}=C_{\psi,\varphi,\max}^*\preceq C_{\psi,\varphi}^*=C_{\psi,\varphi}, \text{ i.e. } C_{\psi,\varphi} = C_{\psi,\varphi,\max}.
$$    
The proof is completed.
\end{proof}

\begin{cor}
Let $\psi$ be a nonzero entire function on $\C^d$ and $\varphi$ a holomorphic self-mapping of $\C^d$ such that the domain of the operator $C_{\psi,\vap,\max}$ contains all kernel functions. Then the following statements are equivalent:
\begin{itemize}
\item[(i)] The operator $C_{\psi,\vap,\max}$ is real symmetric (or, skew-real symmetric);
\item[(ii)] $C_{\psi,\vap,\max}\preceq C_{\psi,\vap,\max}^*$ (or, $C_{\psi,\vap,\max}\preceq -C_{\psi,\vap,\max}^*$; respectively);
\item[(iii)] The symbols $\psi$ and $\vap$ are of the forms \eqref{eq-rs} with the conditions \eqref{eq-cd-rs}.
\end{itemize}
\end{cor}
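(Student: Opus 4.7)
The plan is to prove (i) $\Rightarrow$ (ii) $\Rightarrow$ (iii) $\Rightarrow$ (i), where the first and last implications are essentially immediate from the definitions and Theorem \ref{thm-rs}, while the middle implication is where the added hypothesis on the domain does real work.

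The implication (i) $\Rightarrow$ (ii) is trivial: if $C_{\psi,\vap,\max}=C_{\psi,\vap,\max}^*$ (respectively $=-C_{\psi,\vap,\max}^*$), then in particular $C_{\psi,\vap,\max}\preceq C_{\psi,\vap,\max}^*$ (respectively $\preceq -C_{\psi,\vap,\max}^*$). The implication (iii) $\Rightarrow$ (i) is exactly the forward direction of Theorem \ref{thm-rs}(a), so nothing new is needed. Hence the entire content of the corollary lies in showing (ii) $\Rightarrow$ (iii).

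To carry out (ii) $\Rightarrow$ (iii), I will test the inclusion on the family of reproducing kernels, which is available by the standing hypothesis that $K_z\in\text{dom}(C_{\psi,\vap,\max})$ for every $z\in\C^d$. From (ii) and Lemma \ref{W*Kz-prop}(a), for each $z\in\C^d$ we have
\begin{gather*}
C_{\psi,\vap,\max}(K_z)=C_{\psi,\vap,\max}^*(K_z)=\overline{\psi(z)}K_{\vap(z)},
\end{gather*}
so the condition (i) of Lemma \ref{lem202304242020}, taken with $(\xi,\phi)=(\psi,\vap)$, is satisfied. The equivalent condition (iii) of that lemma then forces $\psi(z)=\tha e^{\inner{z}{p}}$, $\vap(z)=Qz+q$ and simultaneously $\psi(z)=\overline{\tha}e^{\inner{z}{q}}$, $\vap(z)=Q^*z+p$. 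Matching these two representations gives $\tha=\overline{\tha}$, $p=q$, and $Q=Q^*$, which is precisely \eqref{eq-rs} together with \eqref{eq-cd-rs}. The skew-real case is entirely analogous, with $C_{\psi,\vap,\max}(K_z)=-\overline{\psi(z)}K_{\vap(z)}$ producing $\tha=-\overline{\tha}$, i.e., $\tha\in i\R$, while the constraints on $\vap$ are unchanged.

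The only subtle point, and the place where the domain hypothesis is essential, is that without $K_z\in\text{dom}(C_{\psi,\vap,\max})$ the inclusion (ii) gives no information about the action on kernels, since the defining identity $C_{\psi,\vap,\max}(f)=C_{\psi,\vap,\max}^*(f)$ in $\preceq$ only has to hold on $\text{dom}(C_{\psi,\vap,\max})$. Once kernels are in that domain, however, everything reduces to Lemma \ref{lem202304242020}, and no extra computation beyond what is already recorded in Theorem \ref{thm-rs} and Proposition \ref{202307121414} is required.
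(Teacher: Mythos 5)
Your proposal is correct and follows essentially the same route as the paper: (i) $\Rightarrow$ (ii) is immediate, (iii) $\Rightarrow$ (i) is Theorem \ref{thm-rs}, and (ii) $\Rightarrow$ (iii) is obtained by evaluating the inclusion on the kernel functions (using the domain hypothesis and Lemma \ref{W*Kz-prop}) and then invoking Lemma \ref{lem202304242020}. You merely spell out more explicitly than the paper how taking $(\xi,\phi)=(\psi,\vap)$ (or $(-\psi,\vap)$ in the skew case) in that lemma forces $p=q$, $Q=Q^*$ and $\tha=\overline{\tha}$ (resp.\ $\tha=-\overline{\tha}$), which is a welcome clarification but not a different argument.
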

\begin{proof}
The direction (i) $\Rightarrow$ (ii) is obvious; meanwhile, the direction (iii) $\Rightarrow$ (i) follows from Theorem \ref{thm-rs}. Moreover, if the operator $C_{\psi,\vap,\max}$ is real symmetric and its domain contains all kernel functions $K_z, z \in \C^d$, then from (ii) it follows that
$$
C_{\psi,\varphi,\max}K_z =  C_{\psi,\vap,\max}^* K_z \ \text{ for each } z\in\C^d,  
$$
which, by Lemma \ref{lem202304242020}, implies (iii).
\end{proof}

\subsection{Complex symmetry} \label{subsec-cs}
The aim of this subsection is to prove Theorem \ref{thm-cs}. We do a little more by characterizing two maximal weighted composition operators $C_{\psi,\vap, \max}$ and $C_{\xi,\phi, \max}$ that satisfy
$$
\cj C_{\psi,\vap, \max}^*\cj\preceq C_{\xi,\phi, \max}.     
$$
As it turns out, these operators must be generated by the symbols of the explicit forms. To do so, we need the following two lemmas.

\begin{lem}\label{lem-con}
    Always has $\cj K_z=ce^{\inner{b}{\overline{z}}}K_{A^*\overline{z}+\overline{b}}$ for every $z\in\C^d.$
\end{lem}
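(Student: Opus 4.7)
The statement is essentially a direct calculation: apply the definition of $\cj$ to the reproducing kernel $K_z(u)=e^{\inner{u}{z}}$ and repackage the result as a scalar multiple of another kernel function. I do not expect any genuine obstacle; the only thing to watch is the careful handling of conjugates and the adjoint $A^*$ when rewriting inner products that have a complex-conjugated entry.

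The plan is as follows. Fix $z,u\in\C^d$. By the definition of $\cj$ and of the kernel,
\begin{align*}
\cj K_z(u) &= c\,e^{\inner{u}{\overline{b}}}\,\overline{K_z\bigl(\overline{Au+b}\bigr)} = c\,e^{\inner{u}{\overline{b}}}\,\overline{\,e^{\inner{\overline{Au+b}}{z}}\,} = c\,e^{\inner{u}{\overline{b}}}\,e^{\,\overline{\inner{\overline{Au+b}}{z}}\,}.
\end{align*}
Thus the whole argument reduces to identifying the exponent $\overline{\inner{\overline{Au+b}}{z}}$ with the exponent that appears in $K_{A^*\overline{z}+\overline{b}}(u)$, up to the factor $e^{\inner{b}{\overline{z}}}$.

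Next I would use the elementary identity $\overline{\inner{\overline{w}}{z}}=\inner{z}{\overline{w}}$, valid for all $w,z\in\C^d$, which follows from $\overline{\inner{x}{y}}=\inner{y}{x}$ and $\overline{\overline{w}}=w$. Splitting along $w=Au$ and $w=b$ gives
\begin{gather*}
\overline{\inner{\overline{Au+b}}{z}} = \inner{z}{\overline{Au}} + \inner{z}{\overline{b}} = \inner{z}{\overline{Au}} + \inner{b}{\overline{z}},
\end{gather*}
where I used $\inner{z}{\overline{b}}=\sum z_ib_i=\inner{b}{\overline{z}}$. For the first term, the defining relation of the adjoint yields $\inner{z}{\overline{Au}}=\sum_{i,j}A_{ij}u_jz_i=\inner{u}{A^*\overline{z}}$, essentially a one-line index computation.

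Putting these ingredients together gives
\begin{gather*}
\cj K_z(u)= c\,e^{\inner{u}{\overline{b}}}\,e^{\inner{u}{A^*\overline{z}} + \inner{b}{\overline{z}}}= c\,e^{\inner{b}{\overline{z}}}\,e^{\inner{u}{A^*\overline{z}+\overline{b}}}= c\,e^{\inner{b}{\overline{z}}}\,K_{A^*\overline{z}+\overline{b}}(u),
\end{gather*}
which is the claimed formula. Since $u\in\C^d$ was arbitrary, the identity of functions follows. Notice that the three conditions \eqref{eq-cj} on $A,b,c$ are not needed for this particular identity; they only enter when one wants $\cj$ to actually be a conjugation, so the lemma holds for the formal expression defining $\cj$ regardless.
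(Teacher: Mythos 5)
Your proposal is correct and follows essentially the same route as the paper: a direct evaluation of $\cj K_z$ at an arbitrary point, followed by rewriting the conjugated exponent via $\overline{\inner{\overline{w}}{z}}=\inner{z}{\overline{w}}$ and moving $A$ across the inner product to $A^*$. Your closing observation that the conditions \eqref{eq-cj} are not needed for this identity is accurate, but otherwise the two arguments coincide.
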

\begin{proof}
For every $z,x\in\C^d,$ we have
\begin{align*}
\cj K_z(x)&=ce^{\inner{x}{\overline{b}}}\overline{K_z\left(\overline{Ax+b}\right)}
=ce^{\inner{x}{\overline{b}}}e^{\inner{Ax+b}{\overline{z}}}\\
&=ce^{\inner{b}{\overline{z}}}e^{\inner{x}{A^*\overline{z}+\overline{b}}}
=ce^{\inner{b}{\overline{z}}}K_{A^*\overline{z}+\overline{b}}(x).
\end{align*}
\end{proof}

\begin{lem}\label{lem-202305071534}
Let $\cj$ be a conjugation on $\spc$, $\psi$ a nonzero entire function on $\C^d$ and $\vap$ a holomorphic self-mapping of $\C^d$. Then 
\begin{gather}\label{eq-ckz}
\cj C_{\psi,\vap,\max}^*\cj K_z=C_{\xi,\phi,\max}K_z \ \text{ for each } z\in\C^d, 
\end{gather}
if and only if the symbols are of the forms
\begin{gather}\label{202304261831}
\begin{cases}
\psi(z)=\tha e^{\inner{p-q}{\overline{b}}+\inner{z}{\delta}},\quad\vap(z)=Qz+q,\\
\xi(z)=\tha e^{\inner{z}{\ell}},\quad\phi(z)=A^*Q^{\bft}Az+p,
\end{cases}
\end{gather}
where $Q$ is a $d\times d$ matrix, $p, q \in \C^d$, $\theta \in \C$, and 
\begin{gather}\label{202307242121}
\begin{cases}
\delta=A^*\overline{p}+\overline{b}-Q^*\overline{b},\\
\ell=A^*\overline{q} +\overline{b}+A^*\overline{Q}b.
\end{cases} 
\end{gather}
\end{lem}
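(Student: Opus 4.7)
The plan is to absorb the conjugation $\cj$ into the symbols on the left-hand side of \eqref{eq-ckz}, reducing the lemma to the adjoint–kernel identity already characterized in Lemma \ref{lem202304242020}. Combining Lemma \ref{lem-con} with Lemma \ref{W*Kz-prop}(a), and using the three defining identities from \eqref{eq-cj}, namely $A^{-1}=A^*=\overline{A}$, $A\overline{b}+b=0$, and $|c|^2 e^{|b|^2}=1$, one computes
$$\cj C^*_{\psi,\vap,\max}\cj K_z = e^{-|b|^2}e^{\inner{\overline{z}}{b}+\inner{b}{\overline{\vap(A^*\overline{z}+\overline{b})}}}\psi(A^*\overline{z}+\overline{b})K_{A^*\overline{\vap(A^*\overline{z}+\overline{b})}+\overline{b}},$$
where the prefactor $|c|^2=e^{-|b|^2}$ is produced by the two successive applications of the anti-linear isometry $\cj$.

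The point $A^*\overline{\vap(A^*\overline{z}+\overline{b})}+\overline{b}$ is a holomorphic function of $z$, since the two conjugations compose to the identity; symmetrically, the scalar prefactor is anti-holomorphic in $z$. Accordingly, I would introduce the holomorphic self-map $\tilde{\vap}$ of $\C^d$ defined by $\tilde{\vap}(z):=A^*\overline{\vap(A^*\overline{z}+\overline{b})}+\overline{b}$ and the unique nonzero entire function $\tilde{\psi}$ on $\C^d$ whose conjugate equals that scalar prefactor. Lemma \ref{W*Kz-prop}(a) then identifies $\cj C^*_{\psi,\vap,\max}\cj K_z=\overline{\tilde{\psi}(z)}K_{\tilde{\vap}(z)}=C^*_{\tilde{\psi},\tilde{\vap},\max}K_z$, so \eqref{eq-ckz} becomes $C^*_{\tilde{\psi},\tilde{\vap},\max}K_z=C_{\xi,\phi,\max}K_z$ for every $z\in\C^d$. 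Applying Lemma \ref{lem202304242020} to this identity yields constants $\tha_1\in\C$, a $d\times d$ matrix $Q_1$, and vectors $p_1,q_1\in\C^d$ such that
\begin{gather*}
\tilde{\psi}(z)=\tha_1 e^{\inner{z}{p_1}},\quad\tilde{\vap}(z)=Q_1 z+q_1,\quad\xi(z)=\overline{\tha_1}e^{\inner{z}{q_1}},\quad\phi(z)=Q_1^*z+p_1.
\end{gather*}

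The remaining step is to translate these intermediate data back into $(\psi,\vap)$. With the change of variables $w=A^*\overline{z}+\overline{b}$, which under $A\overline{b}=-b$ inverts to $\overline{z}=Aw+b$, the identity $\tilde{\vap}(z)=A^*\overline{\vap(w)}+\overline{b}$ solves to $\vap(w)=Qw+q$ with $Q=A^*\overline{Q_1}A$ and $q=A^*\overline{Q_1}b+A^*\overline{q_1}+\overline{b}$; conjugation then gives $Q_1^*=A^*Q^\bft A$ and $q_1=A^*\overline{Q}b+A^*\overline{q}+\overline{b}$, producing the advertised forms of $\phi$ and $\ell$ with $p=p_1$. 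A parallel computation starting from $\overline{\tilde{\psi}(z)}=e^{-|b|^2}e^{\inner{\overline{z}}{b}+\inner{b}{\overline{\vap(w)}}}\psi(w)$ collapses to $\psi(w)=\tha e^{\inner{p-q}{\overline{b}}+\inner{w}{\delta}}$ with $\delta$ as in \eqref{202307242121}, after setting $\tha=\overline{\tha_1}$ and combining the scalar terms using $p\cdot b-b\cdot q=\inner{p-q}{\overline{b}}$. The converse implication is obtained by reversing the same chain of reductions: starting from \eqref{202304261831}--\eqref{202307242121}, one verifies that the induced $(\tilde{\psi},\tilde{\vap})$ together with $(\xi,\phi)$ fit Lemma \ref{lem202304242020}(iii), whence \eqref{eq-ckz}. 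The principal technical obstacle throughout is bookkeeping: one must consistently apply the consequences $\overline{A^*}=A$, $A^\bft=A$, and $A^*b=-\overline{b}$ of \eqref{eq-cj} so that the scalar factors reorganize exactly into $\inner{p-q}{\overline{b}}$ and the linear coefficients precisely into $\delta$ and $\ell$ as in \eqref{202307242121}.
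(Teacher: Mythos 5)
Your proposal is correct, but it is organized differently from the paper's proof, and the difference is worth recording. Both arguments begin identically: using Lemma \ref{lem-con} and Lemma \ref{W*Kz-prop}(a) twice, one computes $\cj C^*_{\psi,\vap,\max}\cj K_z$ explicitly, and your prefactor $|c|^2e^{\inner{\overline{z}}{b}+\inner{b}{\overline{\vap(A^*\overline{z}+\overline{b})}}}\psi(A^*\overline{z}+\overline{b})$ agrees with the paper's (note $\inner{b}{\overline{w}}=\inner{w}{\overline{b}}$). From there the paper treats the resulting identity as a functional equation in $(z,x)$ and solves it from scratch: set $z=0$ to find $\xi$, set $x=0$, divide, and differentiate with respect to $\overline{z_j}$ to force $\phi$, hence $\vap$, to be affine. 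You instead observe that the left-hand side is exactly $\overline{\tilde{\psi}(z)}K_{\tilde{\vap}(z)}=C^*_{\tilde{\psi},\tilde{\vap},\max}K_z$ for the conjugated symbols $\tilde{\vap}(z)=A^*\overline{\vap(A^*\overline{z}+\overline{b})}+\overline{b}$ and $\tilde{\psi}$ (holomorphy of $\tilde\vap$ and anti-holomorphy of the prefactor are correctly justified by the double conjugation), so that the lemma reduces to condition (i) of Lemma \ref{lem202304242020}, which has already been proved. This factorization buys you both directions of the equivalence at once and avoids repeating the differentiation argument that establishes affineness; the cost is the back-translation from $(\tha_1,Q_1,q_1,p_1)$ to $(\tha,Q,q,p,\delta,\ell)$. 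I checked that bookkeeping: with $w=A^*\overline{z}+\overline{b}$ and $\overline{z}=Aw+b$ one gets $Q=A^*\overline{Q_1}A$, hence $Q_1^*=A^*Q^{\bft}A$ as in $\phi$; solving for $q_1$ and using $A\overline{b}=-b$ gives $q_1=A^*\overline{q}+\overline{b}+A^*\overline{Q}b=\ell$; and the scalar identity $\overline{\tilde{\psi}(z)}=|c|^2e^{\inner{\overline{z}}{b}+\inner{b}{\overline{\vap(w)}}}\psi(w)$ together with $|c|^2=e^{-|b|^2}$ and $A^*b=-\overline{b}$ collapses to $\psi(w)=\overline{\tha_1}e^{\inner{p-q}{\overline{b}}+\inner{w}{\delta}}$ with $\delta=A^*\overline{p}+\overline{b}-Q^*\overline{b}$, exactly \eqref{202304261831}--\eqref{202307242121}. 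So the proposal is a valid, slightly more economical reorganization of the paper's argument rather than a new idea.
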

\begin{proof}
Let \eqref{eq-ckz} hold. Using Lemmas \ref{W*Kz-prop} and \ref{lem-con}, we can check that \eqref{eq-ckz} is equivalent to that
 \begin{gather}\label{202304261830}
    |c|^2 \psi(A^*\overline{z}+\overline{b}) e^{\inner{\overline{z}}{b}+\inner{\vap(A^*\overline{z}+\overline{b})}{\overline{b}}+\inner{x}{A^*\overline{\vap(A^*\overline{z}+\overline{b})}+\overline{b}}}=\xi(x)e^{\inner{\phi(x)}{z}} \ \text{for all } z,x\in\C^d.
\end{gather}
Setting $z=0$ in \eqref{202304261830}, we get the form 
\begin{equation*}
\xi(x)=\tha e^{\inner{x}{\ell}}, \ \text{ where } \ell=A^*\overline{\vap(\overline{b})}+\overline{b} \ \text{ and } \tha=|c|^2\psi(\overline{b})e^{\inner{\vap(\overline{b})}{\overline{b}}}.
\end{equation*}
Substituting the form back into \eqref{202304261830}, for every $z, x \in \C^d$, we obtain
\begin{gather}\label{202307240912}
|c|^2 \psi(A^*\overline{z}+\overline{b}) e^{\inner{\overline{z}}{b}+\inner{\vap(A^*\overline{z}+\overline{b})}{\overline{b}}+\inner{x}{A^*\overline{\vap(A^*\overline{z}+\overline{b})}+\overline{b}}}=\tha e^{\inner{x}{\ell}+\inner{\phi(x)}{z}}.   
\end{gather}
Letting $x=0$ in the line above, we get
\begin{gather}\label{202307240913}
|c|^2 \psi(A^*\overline{z}+\overline{b}) e^{\inner{\overline{z}}{b}+\inner{\vap(A^*\overline{z}+\overline{b})}{\overline{b}}}=\tha e^{\inner{\phi(0)}{z}}.    
\end{gather}
Substituting \eqref{202307240913} back into \eqref{202307240912} gives
\begin{gather*}
\inner{\phi(0)}{z}+\inner{x}{A^*\overline{\vap(A^*\overline{z}+\overline{b})}+\overline{b}}=\inner{x}{\ell}+\inner{\phi(x)}{z}
\end{gather*}
or equivalent to that
\begin{gather}\label{202305051338}
\inner{x}{\omega(z)}=\inner{\phi(x)-\phi(0)}{z},   \end{gather}
where
\begin{gather*}
\omega(z)=A^*\left(\overline{\vap(A^*\overline{z}+\overline{b})}-\overline{\vap(\overline{b})}\right)=(\omega_1(z),\omega_2(z),\cdots,\omega_d(z)).
\end{gather*}
For each $j \in \{1,2, \cdots, d\}$, differentiating \eqref{202305051338} with respect to the variable $\overline{z_j}$, we get
\begin{gather*}
\sum_{k=1}^d x_k \overline{\frac{\partial\omega_k(z)}{\partial z_j}}=\phi_j(x)-\phi_j(0).    
\end{gather*}
Consequently, the symbol $\phi$ takes the form $\phi(x)=Hx+p.$ Thus, by \eqref{202305051338}, the symbol $\vap$ takes the form $\vap(z)=Qz+q$. Now we insert these forms back into \eqref{202305051338}
\begin{gather*}
    \inner{x}{A^*\overline{QA^*}z}=\inner{Hx}{z},
\end{gather*}
which means $H^*=A^*\overline{QA^*}$, i.e., by \eqref{eq-cj}, $H = A^* Q^{\textbf{t}} A$.

Finally, putting $x: = A^* \overline{z} + \overline{b}$,  substituting it into \eqref{202307240913}, and using \eqref{eq-cj}, we get
\begin{align*}
\psi(x) &= \frac{\theta}{|c|^2} e^{\inner{x}{A^*\overline{p} - A^*b - Q^*\overline{b}} + \inner{A\overline{b}}{b - \overline{p}} - \inner{q} {\overline{b}}} \\
& = \frac{\theta}{|c|^2} e^{\inner{x}{\delta} - \inner{b}{b - \overline{p}} - \inner{q} {\overline{b}}} = \theta e^{\inner{p - q} {\overline{b}} + \inner{x}{\delta}}.
\end{align*}

Conversely, suppose that the symbols are of the forms \eqref{202304261831} with the conditions \eqref{202307242121}. Then, using \eqref{eq-cj}, we have
$$
\psi(A^* \overline{z} + \overline{b}) = \theta e^{\inner{p-q}{\overline{b}} + \inner{A^* \overline{z} + \overline{b}}{A^*\overline{p} +\overline{b} - Q^* \overline{b}}} 
 = \theta e^{\inner{A^* Q^{\textbf{t}}b - q + {\overline{b}}}{\overline{b}} + \inner{p - \overline{b} - A^* Q^{\textbf{t}}b}{z}}
$$
and
\begin{align*}
& \inner{\overline{z}}{b}+\inner{\vap(A^*\overline{z}+\overline{b})}{\overline{b}}+\inner{x}{A^*\overline{\vap(A^*\overline{z}+\overline{b})}+\overline{b}}
 \\
 = & \; \inner{A^* Q^{\textbf{t}}Ax + A^* Q^{\textbf{t}} b +  \overline{b}}{z} + \inner{q}{\overline{b}} + \inner{Q^{\textbf{t}} b}{b}  + \inner{x}{A^* \overline{Q}b + A^*\overline{q} + \overline{b}},
\end{align*}
which together with \eqref{eq-cj} implies that
\begin{align*}
 & |c|^2 \psi(A^*\overline{z}+\overline{b}) e^{\inner{\overline{z}}{b}+\inner{\vap(A^*\overline{z}+\overline{b})}{\overline{b}}+\inner{x}{A^*\overline{\vap(A^*\overline{z}+\overline{b})}+\overline{b}}} \\
 = & \; \theta e^{\inner{x}{ A^*\overline{q} + \overline{b} + A^* \overline{Q}b}} e^{\inner{A^* Q^{\textbf{t}}Ax + p}{z}} = \xi(x) e^{\inner{\phi(x)}{z}}.
\end{align*}
Thus, \eqref{202304261830}, and hence, \eqref{eq-ckz} hold.
\end{proof}

\begin{prop}\label{prop-202307261556}
Let $\cj$ be a conjugation on $\spc$, $\psi$ a nonzero entire function on $\C^d$ and $\vap$ a holomorphic self-mapping of $\C^d$. Then 
\begin{gather}\label{eq-cjcs}
\cj C_{\psi,\vap,\max}^*\cj\preceq C_{\xi,\phi,\max}
\end{gather}
if and only if the symbols $\psi, \xi$ and $\vap, \phi$ are of the forms \eqref{202304261831} with the conditions \eqref{202307242121}.
In this case, 
$$
\cj C_{\xi,\phi,\max}\cj=C_{\vartheta,\omega,\max}, \text{ where }
\begin{cases}
\vartheta(z)=\overline{\tha}e^{\inner{\overline{b}}{p-q}+\inner{z}{q}},\\
\omega(z)=Q^*z+\delta.
\end{cases}
$$
\end{prop}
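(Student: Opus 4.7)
The plan is to adapt the blueprint of Proposition \ref{202307121414}, with Lemma \ref{lem-202305071534} furnishing the kernel-level characterization and the conjugation $\cj$ contributing some extra antilinear bookkeeping.

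For the forward direction, suppose $\cj C^*_{\psi,\vap,\max}\cj\preceq C_{\xi,\phi,\max}$. I first verify that every kernel $K_z$ belongs to $\text{dom}(\cj C^*_{\psi,\vap,\max}\cj)$, which amounts to showing $\cj K_z\in\text{dom}(C^*_{\psi,\vap,\max})$. Since Lemma \ref{lem-con} expresses $\cj K_z$ as a scalar multiple of $K_{A^*\overline{z}+\overline{b}}$, this is immediate from Lemma \ref{W*Kz-prop}(a). The assumed inclusion then forces $\cj C^*_{\psi,\vap,\max}\cj K_z = C_{\xi,\phi,\max}K_z$ for every $z\in\C^d$, which by Lemma \ref{lem-202305071534} is equivalent to the forms \eqref{202304261831} together with the side-conditions \eqref{202307242121}.

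For the reverse direction, assume the forms. Under these forms, a direct check shows that $E_{\psi,\vap}(K_w)$ is a scalar multiple of a kernel function, so $K_w\in\text{dom}(C_{\psi,\vap,\max})$ for every $w$, and in particular $\cj K_z\in\text{dom}(C_{\psi,\vap,\max})$. For any $f\in\text{dom}(\cj C^*_{\psi,\vap,\max}\cj)$, combining the reproducing property with the identity $\inner{\cj u}{v}=\overline{\inner{u}{\cj v}}$ (a consequence of $\cj$ being an antilinear isometric involution) and the adjoint relation gives
\begin{gather*}
(\cj C^*_{\psi,\vap,\max}\cj f)(z) = \overline{\inner{C^*_{\psi,\vap,\max}\cj f}{\cj K_z}} = \overline{\inner{\cj f}{C_{\psi,\vap,\max}\cj K_z}} = \inner{f}{\cj C_{\psi,\vap,\max}\cj K_z}.
\end{gather*}
The key computational step is to verify $\cj C_{\psi,\vap,\max}\cj K_z = \overline{\xi(z)}K_{\phi(z)}$; this is a direct (albeit lengthy) calculation, applying Lemma \ref{lem-con} twice, using the explicit action of $C_{\psi,\vap,\max}$ on a kernel, and collapsing everything via \eqref{eq-cj} (in particular $A\overline{A}=I$ and $A\overline{b}=-b$) together with \eqref{202307242121}. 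Once this identity is in hand, the reproducing property produces $(\cj C^*_{\psi,\vap,\max}\cj f)(z) = \xi(z)f(\phi(z)) = E_{\xi,\phi}(f)(z)$, whence $f\in\text{dom}(C_{\xi,\phi,\max})$ and the claimed inclusion holds.

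For the second claim $\cj C_{\xi,\phi,\max}\cj = C_{\vartheta,\omega,\max}$, I would compute $(\cj C_{\xi,\phi,\max}\cj f)(x)$ directly from the defining formula for $\cj$: after unwinding the two antilinear applications around $C_{\xi,\phi,\max}$, the factor in front of $f$ collapses to $\vartheta(x)$ and its argument collapses to $\omega(x) = Q^*x + \delta$, using once more the identities in \eqref{eq-cj}, \eqref{202307242121}, and the symmetry $A^{\bft}=A$ implicit in \eqref{eq-cj}. This gives the equality of the two operators together with their domains. The main obstacle throughout is the algebraic bookkeeping: verifying that two successive applications of $\cj$, interspersed with a weighted composition, distill the transformed kernel arguments into exactly $\delta$, $\ell$, $\vartheta$, $\omega$; the conjugation conditions \eqref{eq-cj} and the side-relations \eqref{202307242121} are precisely engineered to make these cancellations line up.
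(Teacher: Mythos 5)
Your proposal is correct. The forward direction is identical to the paper's: both show $K_z\in\text{dom}(\cj C^*_{\psi,\vap,\max}\cj)$ via Lemmas \ref{W*Kz-prop} and \ref{lem-con}, deduce the kernel-level equality \eqref{eq-ckz}, and invoke Lemma \ref{lem-202305071534}. For the converse you take a mildly different route. The paper first rewrites \eqref{eq-cjcs} in the equivalent form $C^*_{\psi,\vap,\max}\preceq\cj C_{\xi,\phi,\max}\cj$, proves the identity $\cj C_{\xi,\phi,\max}\cj=C_{\vartheta,\omega,\max}$ (which it needs anyway for the last assertion of the proposition), and then verifies the purely linear inclusion $C^*_{\psi,\vap,\max}\preceq C_{\vartheta,\omega,\max}$ by the same reproducing-kernel computation as in Proposition \ref{202307121414}. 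You instead keep the antilinear sandwich on the left, use $\inner{\cj u}{v}=\overline{\inner{u}{\cj v}}$ to move both conjugations onto the kernel, and reduce everything to the identity $\cj C_{\psi,\vap,\max}\cj K_z=\overline{\xi(z)}K_{\phi(z)}$. That identity does hold under \eqref{202304261831}, \eqref{202307242121} and \eqref{eq-cj}: the kernel argument collapses to $\phi(z)$ because $A^*A=I$, $A^{\bft}=A$ and $A^*b=-\overline{b}$, and the scalar collapses to $\overline{\xi(z)}$ because $\inner{b}{Ap}=-\inner{\overline{b}}{p}$ and $|c|^2e^{|b|^2}=1$; so your computation goes through. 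The two arguments are the same calculation organized differently, the paper's version having the small advantage that the auxiliary identity it establishes is exactly the one demanded by the final claim of the proposition, which you then handle separately but in the same way the paper does (direct computation of $\cj E_{\xi,\phi}\cj=E_{\vartheta,\omega}$).
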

\begin{proof}
Let \eqref{eq-cjcs} hold. By Lemmas \ref{W*Kz-prop} and \ref{lem-con}, we get
$$
K_z \in \text{dom}(\cj C_{\psi,\vap,\max}^*\cj), 
$$
which implies that 
$$
K_z \in \text{dom}( C_{\xi,\phi,\max}) \ \text{ and 
} \ \cj C_{\psi,\vap,\max}^*\cj K_z=C_{\xi,\phi,\max}K_z 
$$
for each $z \in \C^d$. From this and Lemma \ref{lem-202305071534} the assertion follows.

Conversely, suppose that the symbols $\psi, \xi$ and $\vap, \phi$ are of the forms \eqref{202304261831} with the conditions \eqref{202307242121}.
Note that the desired inclusion \eqref{eq-cjcs} is equivalent to that
\begin{gather}\label{202307261443}
C_{\psi,\vap,\max}^*\preceq \cj C_{\xi,\phi,\max}\cj. 
\end{gather}

First we claim that $\cj C_{\xi,\phi,\max}\cj=C_{\vartheta,\omega,\max}$.
Since the domains are maximal, the equality above can be reduced to proving that $\cj E_{\xi,\phi}\cj=E_{\vartheta,\omega}$. Indeed, for each $f\in\spc$, by the definition of $\cj$ and setting $H=A^*Q^{\bft}A$, we have 
\begin{gather*}
E_{\xi,\phi}\cj f(z)=\xi(z)\cj f(\phi(z))=\tha ce^{\inner{z}{\ell}+\inner{Hz+p}{\overline{b}}}\overline{f(\overline{AHz+Ap+b})},
\end{gather*}
hence,
\begin{align*}
\cj E_{\xi,\phi}\cj f(z)&=ce^{\inner{z}{\overline{b}}}\overline{E_{\xi,\phi}\cj f(\overline{Az+b})}\\
&=\overline{\tha}|c|^2e^{\inner{z}{\overline{b}}+\inner{Az+b}{\overline{\ell}}+\inner{\overline{H}Az+\overline{H}b+\overline{p}}{b}}f(\overline{AH}Az+\overline{AH}b+\overline{Ap+b})\\
&=\overline{\tha}|c|^2e^{\inner{z}{\overline{b}+A^*\overline{\ell}+A^*\overline{H}^*b}+\inner{b}{\overline{\ell}}+\inner{\overline{H}b+\overline{p}}{b}}f(\overline{AH}Az+\overline{AH}b+\overline{Ap+b}).
\end{align*}
Note that using \eqref{eq-cj}, we get
\begin{align*}
\overline{AH}Az+\overline{AH}b+\overline{Ap+b} &= Q^*z+Q^*\overline{A}b+\overline{Ap+b}\\
&=Q^*z-Q^*\overline{b}+\overline{Ap}-\overline{A}b\\
&=Q^*z-Q^*\overline{b}+A^*\overline{p} + \overline{b}
\end{align*}
and
\begin{align*}
\overline{b}+A^*\overline{\ell}+A^*\overline{H}^*b
&=\overline{b}+A^*(A^{\bft}Q\overline{b}+A^{\bft}q+b)+A^*\overline{H}^*b\\
& =\overline{b}+Q\overline{b}+q+A^*b+QA^*b \\
& =Q(\overline{b}+A^*b)+(\overline{b}+A^*b)+q=q
\end{align*}
and
\begin{align*}
\inner{b}{\overline{\ell}}+\inner{\overline{H}b+\overline{p}}{b}
&=\inner{b}{A^{\bft}Q\overline{b}+A^{\bft}q+b}+\inner{\overline{H}b+\overline{p}}{b}\\
&=\inner{b}{-A^{\bft}QA^*b+A^{\bft}q+b}+\inner{\overline{H}b+\overline{p}}{b}\\
&=\inner{b}{A^{\bft}q+b}+\inner{\overline{p}}{b}
=|b|^2+\inner{\overline{b}}{p-q}.
\end{align*}
From these and again \eqref{eq-cj}, we get the claim. 

Next, for every $z,x\in\C^d$, we observe
\begin{align}\label{eq-ekz}
\nonumber
E_{\psi,\vap}K_z(x) &=\tha e^{\inner{p-q}{\overline{b}} + \inner{x}{\delta}+\inner{Qx+q}{z}} \\
&=\tha e^{\inner{p-q}{\overline{b}} + \inner{q}{z}}e^{\inner{x}{Q^*z + \delta}} =\tha e^{\inner{p-q}{\overline{b}} + \inner{q}{z}}K_{Q^*z+\delta}(x)\in\spc,
\end{align}
which means that $K_z\in\text{dom}(C_{\psi,\vap,\max})$ for each $z \in \C^d$.

Finally, for every $f\in\text{dom}(C_{\psi,\vap,\max}^*)$ and $z\in\C^d$, using \eqref{eq-ekz} and the kernel functions, we have
\begin{align*}
C_{\psi,\vap,\max}^*f(z)&=\inner{C_{\psi,\vap,\max}^*f}{K_z}=\inner{f}{C_{\psi,\vap,\max}K_z}\\
&=\overline{\tha}e^{\inner{\overline{b}}{p-q}+\inner{z}{q}}f(Q^*z+\delta) = C_{\vartheta,\omega, \max} = \cj C_{\xi,\phi,\max}\cj f(z),
\end{align*}
which implies \eqref{202307261443}.
\end{proof}

\begin{proof}[The proof of Theorem \ref{thm-cs}]
(a) For the necessary condition, suppose that the operator $C_{\psi,\vap,\max}$ is $\cj$-selfadjoint, which, by Lemma \ref{lem-202305071534}, gives the desired result.

For the sufficient condition, suppose that $\psi$ and $\varphi$ are of the forms \eqref{eq-cs} with the conditions \eqref{eq-cd-cs}. Then, by Proposition \ref{prop-202307261556}, $C_{\psi,\vap,\max}^*\preceq \cj C_{\psi,\vap,\max}\cj$. Now it remains to prove the inverse inclusion. By \eqref{eq-cd-cs}, the matrix $AQ$ is symmetric, hence, by \cite[Corollary 2.6.6]{zbMATH06125590}, we can express $AQ=UMU^{\bft}$, where $U$ is unitary and $M$ is a real diagonal matrix. Use \eqref{eq-cj} to get $Q=A^*UMU^{\bft}$. 
Thus, by \eqref{eq-as}, we obtain
$$ C_{\psi,\vap,\max}=C_{U^{\bft}}C_{\widetilde{\psi},\widetilde{\vap},\max}C_{A^*}C_U
$$
where
\begin{gather*}
\begin{cases}
\widetilde{\psi}(z)=\psi((U^{\bft})^*z)=\tha e^{\inner{(U^{\bft})^*z}{\ell}}=\tha e^{\inner{z}{U^{\bft}\ell}},\\
\widetilde{\vap}(z)=Mz+U^*Aq.
\end{cases}
\end{gather*}
Proposition \ref{prop-202307261556} gives $\cj C_{\psi,\vap,\max}\cj=C_{\vartheta,\omega,\max}$, where
\begin{gather*}
\begin{cases}
\vartheta(z)=\overline{\tha}e^{\inner{z}{q}},\\
\omega(z)=Q^*z+\ell.
\end{cases}
\end{gather*}
Now we consider the operator $C_{\vartheta,\omega,\max}$. Since $Q=A^*UMU^{\bft}$ and $M$ is a real diagonal matrix, we infer $Q^*=(U^{\bft})^*MU^*A$.
This and \eqref{eq-as} imply that
\begin{gather*}
C_{\vartheta, \omega, \max}=  C_{U^*}C_AC_{\widetilde{\vartheta},\widetilde{\omega},\max}C_{(U^{\bft})^*} \ \text{ with } \
\begin{cases}
\widetilde{\vartheta}(z)=\overline{\tha}e^{\inner{z}{U^*Aq}},\\
\widetilde{\omega}(z)=Mz+U^{\bft}\ell.      
\end{cases}
\end{gather*}
Next, for $z\in\C^d$, by \eqref{eq-ekz}, we have 
$$
K_z\in\text{dom}(C_{\psi,\vap,\max}) \text{ and } 
C_{\psi,\vap,\max}K_z=\tha e^{\inner{q}{z}}K_{Q^*z+\ell}.
$$  

With all preparation in place, we prove the inclusion $ \cj C_{\psi,\vap,\max}\cj \preceq  C_{\psi,\vap,\max}^*$.
Using \cite[Proposition 1.7(ii)]{zbMATH06046473}, we get
\begin{align*}
C_{\psi,\vap,\max}^*&=(C_{U^{\bft}}C_{\widetilde{\psi},\widetilde{\vap},\max}C_{A^*}C_U)^*
=(C_{\widetilde{\psi},\widetilde{\vap},\max}C_{A^*}C_U)^*C_{(U^{\bft})^*} \\
& \succeq (C_{A^*}C_U)^*C_{\widetilde{\psi},\widetilde{\vap},\max}^*C_{(U^{\bft})^*} =C_{U^*}C_AC_{\widetilde{\psi},\widetilde{\vap},\max}^*C_{(U^{\bft})^*} \\
& = C_{U^*} C_A C_{\widetilde{\vartheta},\widetilde{\omega},\max}C_{(U^{\bft})^*} =C_{\vartheta,\omega,\max}=\cj C_{\psi,\vap,\max}\cj,
\end{align*}
where we use the fact $C_{\widetilde{\psi},\widetilde{\vap},\max}^* = C_{\widetilde{\vartheta},\widetilde{\omega},\max}$ by Proposition \ref{prop202304250835}.

(b) The sufficient condition follows from part (a).
For the necessary condition, suppose that the operator $C_{\psi,\varphi}$ is $\cj$-selfadjoint. We show that the operator $C_{\psi,\varphi,\max}$ is $\cj$-selfadjoint. Indeed, since $C_{\psi,\varphi}\preceq C_{\psi,\varphi,\max}$, by \cite[Proposition 1.6]{zbMATH06046473}, we have
$$
C_{\psi,\varphi,\max}^*\preceq C_{\psi,\varphi}^*=\cj C_{\psi,\varphi}\cj\preceq\cj C_{\psi,\varphi,\max}\cj,
$$
hence,
$$
\cj C_{\psi,\varphi,\max}^* \cj \preceq C_{\psi,\varphi,\max}.
$$
From this and Proposition \ref{prop-202307261556} it follows that $\psi$ and $\varphi$ are in the forms of \eqref{eq-cs} with the conditions \eqref{eq-cd-cs}. Hence, by part (a), the operator $C_{\psi,\varphi,\max}$ is $\cj$-selfadjoint. Thus, the desired conclusion follows from the following inclusions
$$
C_{\psi,\varphi}\preceq C_{\psi,\varphi,\max}=\cj C_{\psi,\varphi,\max}^*\cj\preceq\cj C_{\psi,\varphi}^*\cj=C_{\psi,\varphi}.
$$    
\end{proof}

For the $\cj$-symmetry of the maximal weighted composition operators, we have the following
\begin{cor}
Let $\cj$ be a conjugation on $\spc$, $\psi$ a nonzero entire function on $\C^d$, and $\varphi$ a holomorphic self-mapping of $\C^d$ such that the domain of the operator $C_{\psi,\vap,\max}$ contains all kernel functions. Then the following statements are equivalent:
\begin{itemize}
\item[(i)] The operator $C_{\psi,\vap,\max}$ is $\cj$-selfadjoint;
\item[(ii)] The operator $C_{\psi,\vap,\max}$ is $\cj$-symmetric; 
\item[(iii)] The symbols $\psi$ and $\varphi$ are of the forms \eqref{eq-cs} with the conditions \eqref{eq-cd-cs}.
\end{itemize}
\end{cor}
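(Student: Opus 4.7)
The plan is to establish the cycle $(i) \Rightarrow (ii) \Rightarrow (iii) \Rightarrow (i)$, in which two implications are essentially immediate from earlier results and only the middle step requires work. The implication $(i) \Rightarrow (ii)$ is trivial from the definitions, since $\cj$-selfadjointness ($S = \cj S^* \cj$) is a priori stronger than $\cj$-symmetry ($S \preceq \cj S^* \cj$). The implication $(iii) \Rightarrow (i)$ is precisely Theorem \ref{thm-cs}(a) applied to $C_{\psi,\vap,\max}$.

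The substantive step is $(ii) \Rightarrow (iii)$. Assume $C_{\psi,\vap,\max} \preceq \cj C_{\psi,\vap,\max}^* \cj$. Because the kernel functions $K_z$ lie in $\text{dom}(C_{\psi,\vap,\max})$ by hypothesis, the inclusion forces $K_z \in \text{dom}(\cj C_{\psi,\vap,\max}^* \cj)$ and
$$
\cj C_{\psi,\vap,\max}^* \cj K_z \;=\; C_{\psi,\vap,\max} K_z \qquad \text{for every } z \in \C^d.
$$
This is exactly the kernel-function identity in the hypothesis of Lemma \ref{lem-202305071534}, applied with $(\xi,\phi) = (\psi,\vap)$. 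Invoking that lemma therefore yields the forms \eqref{202304261831} together with \eqref{202307242121}, in which the pair $(\xi,\phi)$ is forced to coincide with $(\psi,\vap)$.

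Reading off the equality $\vap = \phi$ gives $Qz + q = A^* Q^{\textbf{t}} A z + p$, hence $Q = A^* Q^{\textbf{t}} A$ and $q = p$. The matrix relation rearranges (using $A A^* = I$) to $AQ = Q^{\textbf{t}} A$, and taking transposes together with the symmetry $A^{\textbf{t}} = A$ (which is forced by the conjugation conditions $A^* = \overline{A}$ in \eqref{eq-cj}) delivers $(AQ)^{\textbf{t}} = AQ$, i.e.\ the first half of \eqref{eq-cd-cs}. Next, the equality $\psi = \xi$ combined with $p = q$ absorbs the scalar factor $e^{\inner{p-q}{\overline{b}}} = 1$ and identifies the two linear exponents, i.e.\ $\delta = \ell$; the required form $\psi(z) = \tha e^{\inner{z}{\ell}}$ of \eqref{eq-cs} is then immediate, and the identity $\ell = \overline{Aq} + \overline{b} - Q^* \overline{b}$ of \eqref{eq-cd-cs} follows directly from the definition of $\delta$ in \eqref{202307242121} once $p = q$ is substituted.

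The main, though minor, obstacle is the bookkeeping needed to confirm that the two a priori independent vectors $\delta$ and $\ell$ in Lemma \ref{lem-202305071534} actually coincide under the identification $p=q$; this is where one uses the relation $AQ = Q^{\textbf{t}} A$ proved above together with the conjugation identity $A\overline{b} + b = 0$ to collapse $\ell - \delta = A^* \overline{Q} b + Q^* \overline{b}$ to zero. All other steps are direct quotations of Lemma \ref{lem-202305071534} and Theorem \ref{thm-cs}(a).
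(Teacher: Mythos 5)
Your proposal is correct and follows essentially the same route as the paper: the cycle $(i)\Rightarrow(ii)\Rightarrow(iii)\Rightarrow(i)$, with $(i)\Rightarrow(ii)$ trivial, $(iii)\Rightarrow(i)$ quoted from Theorem \ref{thm-cs}(a), and $(ii)\Rightarrow(iii)$ obtained by applying Lemma \ref{lem-202305071534} to the kernel functions with $(\xi,\phi)=(\psi,\vap)$. The only difference is that you carry out explicitly the bookkeeping (deriving $(AQ)^{\bft}=AQ$ from $Q=A^*Q^{\bft}A$ and checking $\delta=\ell$ via $A\overline{b}+b=0$) that the paper leaves implicit; your computations there are accurate.
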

\begin{proof}
The direction (i) $\Rightarrow$ (ii) is obvious; meanwhile, the direction (iii) $\Rightarrow$ (i) follows from Theorem \ref{thm-cs}. Moreover, if the operator $C_{\psi,\vap,\max}$ is $\cj$-symmetric and its domain contains all kernel functions $K_z, z \in \C^d$, then
$$
C_{\psi,\varphi,\max}K_z = \cj C_{\psi,\vap,\max}^*\cj K_z \ \text{ for each } z\in\C^d,  
$$
which, by Lemma \ref{lem-202305071534}, implies (iii).
\end{proof}

\subsection{The particular cases of the $\cj$-symmetry} \label{subsec-csrs}
We begin the subsection with the fact that every (possibly unbounded) real symmetric operator is complex symmetric; in other words, there exists a conjugation $\calc$ such that the real symmetric operator is $\calc$-selfadjoint. In the context of weighted composition operators, we realize a such conjugation $\calc$ as an operator $\cj$ corresponding to an adapted and highly relevant choice of the parameters $A,b$ and $c.$

\begin{prop}\label{prop-20230927}
Let $\psi$ be a nonzero entire function on $\C^d$ and $\varphi$ a holomorphic self-mapping of $\C^d$.
 If the operator $C_{\psi,\vap}$ is (skew-)real symmetric, then it is $\cj$-selfadjoint for some conjugation $\cj.$
\end{prop}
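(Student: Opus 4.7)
The plan is to combine Theorems \ref{thm-rs} and \ref{thm-cs}: the former forces the symbols of a (skew-)real symmetric weighted composition operator into an explicit form, and the latter reduces $\cj$-selfadjointness to algebraic conditions on the data $(A,b,c)$ of the conjugation. Since condition \eqref{eq-cd-cs} only requires $\theta\in\C$, the real and skew-real cases of the proposition collapse into one and the same construction. Concretely, Theorem \ref{thm-rs}(b) gives $C_{\psi,\vap}=C_{\psi,\vap,\max}$ with $\psi(z)=\tha e^{\inner{z}{q}}$ and $\vap(z)=Qz+q$ for some $Q=Q^*$; what is left is to produce $(A,b,c)$ satisfying \eqref{eq-cj} together with $(AQ)^{\bft}=AQ$ and $q=\overline{Aq}+(I-Q)\overline{b}$ (the last being \eqref{eq-cd-cs} specialised to $\ell=q$ and $Q^*=Q$).

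The construction I have in mind starts from a spectral decomposition $Q=UMU^*$, with $U$ unitary and $M=\mathrm{diag}(\mu_1,\ldots,\mu_d)$ real, and sets $A:=\overline{U}U^*$. Using the identities $(\overline{U})^{\bft}=U^*$ and $\overline{U}U^{\bft}=I$, a short check shows that $A$ is symmetric and unitary (so $A^{-1}=A^*=\overline{A}$) and that $AQ=\overline{U}MU^*$ is symmetric. For $b$ I would use the ansatz $\overline{b}=iU\beta$ with $\beta\in\R^d$: this makes the relation $A\overline{b}+b=0$ automatic, and after multiplying $(I-Q)\overline{b}=q-\overline{Aq}$ by $U^*$ from the left it reduces to the real diagonal system
\[
(I-M)\beta \;=\; 2\,\mathrm{Im}(U^*q).
\]
Off the eigenvalue $1$ of $Q$ the entries $\beta_j$ are uniquely determined by division. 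To handle the $1$-eigenspace $V_1$, where $I-M$ has a kernel, I would exploit the remaining freedom in the spectral decomposition and choose the orthonormal basis of $V_1$ used to build $U$ so that the orthogonal projection of $q$ onto $V_1$ has real coordinates in that basis (for instance, start the basis with that projection suitably normalised, when nonzero). This annihilates $\mathrm{Im}(U^*q)_j$ for every $j$ with $\mu_j=1$, so one may set $\beta_j=0$ on that block and solve the remaining diagonal entries. Finally one picks any $c$ with $|c|^2=e^{-|b|^2}$ (possible since $|b|^2=\|\beta\|^2$), and the triple $(A,b,c)$ then satisfies \eqref{eq-cj} together with the hypotheses of Theorem \ref{thm-cs}(a), delivering the $\cj$-selfadjointness of $C_{\psi,\vap}=C_{\psi,\vap,\max}$.

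The main obstacle is precisely this interaction with the $1$-eigenspace of $Q$: for a generic spectral decomposition $U$ the solvability condition on $\mathrm{Im}(U^*q)$ along the block $\{j:\mu_j=1\}$ can fail, and it is only the tailored choice of eigenbasis inside $V_1$ that forces it to hold. All other steps are routine linear algebra carried out inside the family of conjugations described by \eqref{eq-cj}, so once this eigenbasis issue is bypassed the proof goes through cleanly.
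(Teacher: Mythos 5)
Your proposal is correct, but it takes a genuinely different route through the family \eqref{eq-cj} than the paper does. The paper fixes $b=0$ and $c=1$ from the outset and puts all the freedom into $A$: it first characterizes the unitary symmetric matrices $A$ with $(AQ)^{\bft}=AQ$ as $A=\overline{V}\left(V_1V_1^{\bft}\oplus\cdots\oplus V_kV_k^{\bft}\right)V^*$ (one unitary block $V_j$ per distinct eigenvalue of $Q=VMV^*$), and then meets the remaining condition, which with $b=0$ reads $Aq=\overline{q}$, by choosing each $V_j$ so that $V_j^{\bft}(V^*q)_{[d_j]}$ is real --- something achievable uniformly on every eigenblock (e.g.\ by diagonal phase matrices), so no eigenvalue is exceptional. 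You instead freeze $A=\overline{U}U^*$ (the paper's form with all $V_j=I$) and switch on the parameter $b$ via $\overline{b}=iU\beta$, reducing \eqref{eq-cd-cs} to the diagonal system $(I-M)\beta=2\,\mathrm{Im}(U^*q)$; this is solvable by division off the eigenvalue $1$, and on the $1$-eigenspace you must rotate the eigenbasis so that the projection of $q$ there has real coordinates --- which is exactly the same basis-adjustment trick the paper applies on every block. Your computations check out ($A$ is unitary symmetric, $AQ=\overline{U}MU^*$ is symmetric, $A\overline{b}+b=0$ holds for real $\beta$, and $|b|=|\beta|$ so $c$ exists), so the argument is sound; what the paper's choice buys is a uniform treatment of all eigenvalues and, as a by-product, a description of the whole family of $b=0$ conjugations that work, while your version isolates the role of the translation parameter $b$ at the cost of the special case at the eigenvalue $1$.
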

\begin{proof}
By Theorem \ref{thm-rs}, the symbols $\psi$ and $\varphi$ are of the forms \eqref{eq-rs} with the conditions \eqref{eq-cd-rs}, i.e., 
$$
\psi(z)=\tha e^{\inner{z}{q}}, \ \vap(z)=Qz+q, 
$$
with $Q=Q^*, \ q\in\C^d$,  and $ \tha\in\R$ (or, $\tha\in i\R$, respectively).

To prove the existence of the conjugation $\cj$, we need more details of the matrix $Q$.
Let $\lambda_1, \dots, \lambda_k$ be the distinct eigenvalues with respective multiplicities $d_1,\dots, d_k$ of the matrix $Q$. Since $Q = Q^*$, by \cite[Theorem 2.5.6]{zbMATH06125590}, $\lambda_1, \dots, \lambda_k$ are real and there is a $d \times d$ unitary matrix $V$ such that 
\begin{equation}\label{eq-qm}
Q = V M V^*, \text{ where } M := \lambda_1 I_{d_1} \oplus \dots \oplus \lambda_k I_{d_k}. 
\end{equation}
Firstly we choose $b = 0, c=1$ and then find a matrix $A$ satisfying the conditions below:
\begin{itemize}
\item[(i)] $A^{-1} = A^* = \overline{A}$,
\item[(ii)] $(AQ)^{\textbf{t}} = AQ$,
\item[(iii)] $Aq = \overline{q}$.
\end{itemize}

Obviously, condition (i) means that $A$ is a unitary and symmetric matrix. Then, using \cite[Corollary 2.6.6]{zbMATH06125590} for the symmetric matrix $A$, we can find a unitary matrix $U$ and a nonnegative diagonal matrix $\Sigma$ such that $A = U \Sigma U^{\textbf{t}}$. If the matrix $A$ is unitary, then 
$$
I = AA^* = U \Sigma U^{\textbf{t}} (U^{\textbf{t}})^* M U^* = U \Sigma^2 U^*,
$$
which is equivalent to that  $\Sigma = I$.
Thus, $A$ satisfies condition (i) if and only if $A = U U^{\bft}$ with a unitary matrix $U$.

Now we consider condition (ii). 
We claim that a $d \times d$ unitary and symmetric matrix $A$ satisfies (ii)
if and only if $A$ is of the form
\begin{equation} \label{eq-ma}
A = \overline{V} \left(V_1V_1^{{\textbf{t}}} \oplus \cdots \oplus V_kV_k^{{\textbf{t}}}\right)V^*,
\end{equation}
where $V_1, \dots, V_k$ are $d_1 \times d_1$, $\dots$, $d_k \times d_k$ unitary matrices.
Indeed, for a such matrix $A$, condition (ii) is equivalent to that
$$
Q = (A^*\overline{V}) M (A^* \overline{V})^*. 
$$
By \cite[Theorem 2.5.4]{zbMATH06125590}, the last condition holds if and only if there are $d_1 \times d_1$ unitary matrix $W_1$, ..., $d_k \times d_k$ unitary matrix $W_k$ such that
$$
V = A^*\overline{V} \left(W_1 \oplus \cdots \oplus W_k \right), \text{ i.e., } A = \overline{V} \left(W_1 \oplus \cdots \oplus W_k \right)V^*.
$$
Moreover, since the matrix $A$ is unitary and symmetric, so are $W_1, \dots, W_k$, and hence, as above, $W_j = V_jV_j^{\textbf{t}}$ with a $d_j \times d_j$ unitary matrix $V_j$ for every $j = 1, \dots, k$. Thus, we get the claim.

Finally, we consider condition (iii). For a matrix of \eqref{eq-ma}, condition (iii) is equivalent to that
$$
 \left(V_1V_1^{{\textbf{t}}} \oplus \cdots \oplus V_kV_k^{{\textbf{t}}} \right)V^*q = \overline{V^*q}, 
$$
that is,
\begin{equation}\label{eq-vj}
V_j^{\textbf{t}} (V^*q)_{[d_j]} =  \overline{V_j^{\textbf{t}} (V^*q)_{[d_j]}}, \text{ i.e., } V_j^{\textbf{t}} (V^*q)_{[d_j]} \in \R^{d_j}, \ j = 1, \dots, k,
\end{equation}
where we divide the vector $V^*q$ into $k$ vectors $(V^*q)_{[d_j]}, \ j = 1, \dots, k$.

Consequently, the matrix $A$ satisfies conditions (i), (ii), (iii) if and only if it is of the form \eqref{eq-ma} with the condition \eqref{eq-vj}.
\end{proof}

By arguments similar to those used in Proposition \ref{prop-20230927}, the result below shows that the normality in the bounded sense is a particular case of the $\cj$-selfadjointness, too.
\begin{prop}
Let $\psi$ be a nonzero entire function on $\C^d$ and $\varphi$ a holomorphic self-mapping of $\C^d$.
If the operator $C_{\psi,\vap}$ is normal in the bounded sense, then it is $\cj$-selfadjoint for some conjugation $\cj.$
\end{prop}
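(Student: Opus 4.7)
The plan is to mirror the proof of Proposition~\ref{prop-20230927}, adapting the real-symmetric spectral data used there to the complex spectral data of a normal matrix. The starting point is the bounded-normal classification due to Zhao (extending Le's one-variable result): for such $C_{\psi,\vap}$ one has $\vap(z) = Qz + q$ with $Q$ normal, $\psi(z) = \tha e^{\inner{z}{p}}$, and the coefficients satisfy
\[
(Q - I)p = (Q^* - I)q \quad \text{and} \quad |p| = |q|.
\]
The spectral theorem produces $Q = VDV^*$ with $V$ a $d\times d$ unitary matrix and $D = \mu_1 I_{d_1}\oplus\cdots\oplus\mu_k I_{d_k}$, where $\mu_1,\ldots,\mu_k$ are the distinct eigenvalues of $Q$ with respective multiplicities $d_1,\ldots,d_k$.

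Next I set $b = 0$, $c = 1$ in \eqref{eq-cj}; the conjugation conditions then reduce to asking that $A$ be unitary and symmetric, and Theorem~\ref{thm-cs}(a) makes $C_{\psi,\vap}$ $\cj$-selfadjoint if and only if $(AQ)^{\bft} = AQ$ and $\overline{Aq} = p$. The parameterization argument in Proposition~\ref{prop-20230927} never uses the reality of the eigenvalues, only the block-scalar form of $D$ on each eigenspace, so it applies verbatim here and yields
\[
A = \overline V \bigl(V_1 V_1^{\bft} \oplus \cdots \oplus V_k V_k^{\bft}\bigr) V^*,
\]
with arbitrary unitary matrices $V_j$ of size $d_j$, as the general form of unitary symmetric $A$ satisfying $(AQ)^{\bft} = AQ$. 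The remaining equation $\overline{Aq} = p$ then decouples into the block equations $V_j V_j^{\bft}(V^*q)_{[d_j]} = \overline{(V^*p)_{[d_j]}}$ for $j = 1, \ldots, k$.

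The main task is to show that these block equations are solvable. A necessary condition is the block-wise length compatibility $|(V^*q)_{[d_j]}| = |(V^*p)_{[d_j]}|$, since $V_j V_j^{\bft}$ is an isometry. Applying $V^*$ to the normality relation $(Q-I)p = (Q^*-I)q$ yields $(\mu_j - 1)(V^*p)_{[d_j]} = (\overline{\mu_j} - 1)(V^*q)_{[d_j]}$, which together with $|\mu_j - 1| = |\overline{\mu_j} - 1|$ forces the length equality on every block with $\mu_j \neq 1$; at most one block has $\mu_j = 1$, and on it the equality is imposed by $|p| = |q|$ together with the equalities already proved on the other blocks. Once the length compatibility is in hand, each block equation admits a solution: given $u, v$ of equal length, one constructs a unitary symmetric $W$ with $Wu = \overline v$ by an orthogonal reduction to the two-dimensional case (settled by an explicit $2\times 2$ computation), and then $W = V_j V_j^{\bft}$ for some unitary $V_j$ via the Autonne--Takagi decomposition. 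Assembling the $V_j$ produces the desired $A$ and hence a conjugation $\cj$ with respect to which $C_{\psi,\vap}$ is $\cj$-selfadjoint.
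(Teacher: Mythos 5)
Your argument is correct and uses the same ingredients as the paper --- Zhao's classification of bounded normal $C_{\psi,\vap}$, the spectral decomposition $Q=VDV^*$, the choice $b=0$, $c=1$, and the parameterization \eqref{eq-ma} of the unitary symmetric matrices $A$ with $(AQ)^{\bft}=AQ$ --- but the decisive last step is handled along a genuinely different route. The paper replaces the selfadjointness requirement $\ell=\overline{Aq}$ coming from \eqref{eq-cd-cs} by the weaker relation $(I-Q)\overline{Aq}=(I-Q^*)q$ and solves that by making $(1-\overline{\lambda_j})V_j^{\bft}(V^*q)_{[d_j]}$ real, a condition on $q$ alone; this pins down $\overline{Aq}$ only modulo $\ker(I-Q)$, so when $1\in\sigma(Q)$ it does not by itself identify $\overline{Aq}$ with the given weight vector. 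You instead impose $\overline{Aq}=p$ exactly, decouple it into the block equations $V_jV_j^{\bft}(V^*q)_{[d_j]}=\overline{(V^*p)_{[d_j]}}$, and establish solvability from the blockwise norm equality, which you correctly extract from $(\mu_j-1)(V^*p)_{[d_j]}=(\overline{\mu_j}-1)(V^*q)_{[d_j]}$ on the blocks with $\mu_j\neq 1$ and from $|p|=|q|$ on the at most one block with $\mu_j=1$. The transitivity step you invoke is sound: a unitary symmetric $W$ with $Wu=w$ exists whenever $|u|=|w|$ (equivalently, conjugations of $\C^{d_j}$ act transitively on spheres), and the Autonne--Takagi factorization converts $W$ into the required $V_jV_j^{\bft}$. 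What your route buys is a complete verification on the eigenvalue-$1$ eigenspace, exactly where the paper's condition (iii) is silent; everywhere else the two proofs use the same data in the same order.
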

\begin{proof}
By Theorem \cite{zbMATH06787249}, the symbols $\psi$ and $\varphi$ are of the forms
$$
\psi(z)=\tha e^{\inner{z}{\ell}}, \ \vap(z)=Qz+q, 
$$
where $Q$ is a $d \times d$ normal matrix with $\|Q\|\leq 1$ and $q,\ell $ are vectors in $\C^d$ such that 
$$
(I-Q)\ell=(I-Q^*)q \ \text{ and } \ \|\ell\|=\|q\|.
$$

Since $Q$ is a normal matrix, by \cite[Theorem 2.5.3]{zbMATH06125590}, $Q$ is of the form \eqref{eq-qm}, however in this case, the eigenvalues can be complex.

As in Proposition \ref{prop-20230927}, we choose $b = 0, c=1$ and then find a matrix $A$ satisfying the conditions below:
\begin{itemize}
\item[(i)] $A^{-1} = A^* = \overline{A}$,
\item[(ii)] $(AQ)^{\textbf{t}} = AQ$,
\item[(iii)] $(I - Q)\overline{Aq} = (I - Q^*)q$.
\end{itemize}
As above, the matrix $A$ is of the form \eqref{eq-ma}. Using this and \eqref{eq-qm}, condition (iii) is equivalent to that
$$
(1 - \overline{\lambda_j}) V_j^{\textbf{t}} (V^*q)_{[d_j]} =   \overline{(1 - \overline{\lambda_j}) V_j^{\textbf{t}} (V^*q)_{[d_j]}}, \text{ i.e., } (1 - \overline{\lambda_j}) V_j^{\textbf{t}} (V^*q)_{[d_j]} \in \R^{d_j}, \ j = 1, \dots, k.
$$
Thus, we get the desired matrix $A$.
\end{proof}

\section{Complex symmetric semigroups}\label{sec-sg}
In the section, we give a characterization of $\cj$-selfadjoint semigroups induced by unbounded operators $(C_{\psi_t,\varphi_t})_{t \geq 0}$. To that aim, we recall some terminologies about unbounded semigroups together with relevant reviews and then prove Theorem \ref{thm-cssg}.
\subsection{Terminologies} \label{subsec-sg}
The motivation to study (semi)groups of bounded operators, comes from various areas such as the study of linear differential equations $x'(t)=Ax(t)$, the operator-valued exponential functions, etc. The central part to the semigroups is disclosed in 1948 with the generation theorem by Hille and Yosida. Thanks to the contributions of schools, the theory has a certain accomplishment, which is presented in the monograph \cite{zbMATH01354832}. What restricts the applicability of classical semigroups is the requirement that operators must be bounded. Hughes \cite{zbMATH03560218} extended the theory of semigroups from bounded to unbounded context. We take a little time to recall Hughes's paper.
\begin{defn}[{\cite{zbMATH03560218}}]\label{defn-202306281646}
A family $(C(t))_{t\geq 0}$ of unbounded, linear operators acting on a Banach space $X$ is called a \emph{unbounded $C_0$-semigroup} if there exists $x\in X$ subject to the following conditions: 
\begin{enumerate}
\item[(A1)] $x\in\bigcap\limits_{t,s\geq 0}\text{dom}[C(t)C(s)]\setminus\{0\}$;
\item[(A2)] $C(t)C(s)x=C(t+s)x$ for every $t,s\geq 0$;
\item[(A3)] $C(\cdot)x$ is continuous on $(0,\infty)$, and $\lim\limits_{t\to 0^+}\|C(t)x-x\|=0.$
\end{enumerate}
For such $(C(t))_{t\geq 0}$, the symbol $D(C)$ denotes the set of elements $x \in X$ that satisfy axioms (A1), (A2) and (A3).
\end{defn}

\noindent 
From now on, for $\omega\in\R$ and $x\in D(C)$, we denote
\begin{equation*}
\|x\|_\omega:=\sup\limits_{t\geq 0}e^{-\omega t}\|C(t)x\| \quad 
 \text{ and } \quad \Sigma_\omega :=\{x\in D(C):\|x\|_\omega<\infty\}.
\end{equation*}

\begin{defn}[{\cite{zbMATH03560218}}]\label{defn202307162040}
Let $(C(t))_{t\geq 0}$ be a unbounded $C_0$-semigroup. 
For each $\omega \in \R$, the operator $G^{\omega}$ is defined in $(\Sigma_{\omega}, \|\cdot\|_{\omega})$ as follows:
\begin{enumerate}
\item The domain
$\text{dom}(G^\omega)$ is the set of all elements $x\in\Sigma_\omega$ such that $C(\cdot)x$ is differentiable at every $t>0$ and there exists $y\in\Sigma_\omega$ subject to $y=\lim\limits_{t\to 0^+}\frac{C(t)x-x}{t}$;

\item The operator $G^{\omega}x:=\lim\limits_{t\to 0^+}\frac{C(t)x-x}{t}$.
\end{enumerate}
The \emph{generator} $G$ of $(C(t))_{t\geq 0}$ is defined so that the restriction of $G$ to each $\Sigma_{\omega}$ is $G^{\omega}$.
\end{defn}

\begin{rem}
If $\omega_1\leq\omega_2$, then $\Sigma_{\omega_1}\subseteq\Sigma_{\omega_2}$ and $G^{\omega_1}\preceq G^{\omega_2}$.
\end{rem}

\begin{defn}[\cite{zbMATH01354832}]
The family $(C(t))_{t\geq 0}$ is called a \emph{$C_0$-semigroup in the bounded sense} if the operator $C(t)$ is bounded for every $t\geq 0$ and $D(C)=X$.  
\end{defn}

\begin{defn}[\cite{zbMATH06626207, zbMATH07175347}]
A $C_0$-semigroup $(C(t))_{t\geq 0}$ is called \emph{complex symmetric} if there exists a conjugation $\calc$ such that the operator $C(t)$ is $\calc$-selfadjoint for every $t\geq 0$. In this case, $(C(t))_{t\geq 0}$ is often called \emph{$\calc$-selfadjoint}.  
\end{defn}

In this section, we are interested in semigroups defined by semiflows and semicocycles.
\begin{defn}\label{defn-vt}
A family $(\varphi_t)_{t \geq 0}$ of nonconstant holomorphic self-mappings of $\C^d$ is called a \emph{semiflow} if the followings hold:
\begin{enumerate}
\item $\varphi_0(z)=z$ for all $z \in \C^d$;
\item the mapping $t\mapsto\varphi_t(\cdot)$ is continuous on $[0, \infty)$;
\item $\varphi_{t+s} = \varphi_t \circ \varphi_s$ for all $t, s \geq 0$.
\end{enumerate}
\end{defn}
\begin{rem}\label{rem-diff-semi}
    It was proven in \cite[Theorem 5]{zbMATH00094996} that if $ (\varphi_t)_{t \geq 0}$ is a semiflow, then $\varphi_t(z)$ is differentiable in $t$.
\end{rem}

\begin{defn}\label{defn-xi}
A family $(\psi_t)_{t \geq 0}$ of nonzero entire functions on $\C^d$ is called a \emph{semicocycle} with respect to the semiflow $(\varphi_t)_{t \geq 0}$ if
\begin{enumerate}
\item $\psi_0(z) = 1$ for all $z \in \C^d$;
\item the mapping $t \mapsto \psi_t(z)$ is differentiable on $[0, \infty)$ for each $z \in \C^d$;
\item $\psi_{t+s} = \psi_t \cdot (\psi_s \circ \varphi_t)$ for all $t, s \geq 0$.
\end{enumerate}
\end{defn}

For a family $\Psi$ of nonzero entire functions $(\psi_t)_{t \geq 0}$ on $\C^d$ and a family $\Phi$ of nonconstant holomorphic self-mappings $(\varphi_t)_{t \geq 0}$ of $\C^d$, we denote by $(C_{\Psi, \Phi}(t))_{t \geq 0}$ or, briefly, $C_{\Psi, \Phi}$ the family of weighted composition operators $(C_{\psi_t,\varphi_t})_{t \geq 0}$. The family $C_{\Psi, \Phi}$ is studied in various spaces: \cite{zbMATH04024093} treats for Hardy spaces, \cite{zbMATH00933265} for Dirichlet space, \cite{zbMATH06626207, zbMATH07175347} for the one variable Fock space. The survey \cite{zbMATH01107617} is a good source for the recent results on the semigroups in the bounded sense.

\subsection{The $\cj$-selfadjoint semigroups $C_{\Psi, \Phi}$} \label{subsec-cssg}
With all preparation in place, we give a characterization of $\cj$-selfadjoint semigroups $C_{\Psi, \Phi}$, i.e.
we prove Theorem \ref{thm-cssg}.
\begin{proof}[The proof of Theorem \ref{thm-cssg}] For the reader's convenience, we divide the proof into several parts.

(1) Suppose that $(C_{\Psi,\Phi}(t))_{t \geq 0}$ is a $\cj$-selfadjoint semigroup. It follows from Theorem \ref{thm-cs}, that $C_{\psi_t, \varphi_t} = C_{\psi_t, \varphi_t, \max}$ and the symbols $\psi_t$ and $\varphi_t$ are of the forms \eqref{eq-cs} with the conditions \eqref{eq-cd-cs}, i.e.
$$
\psi_t(z) = \theta_t e^{\inner{z}{\ell_t}} \ \ \text{ and } \ \ \varphi_t(z) = Q_tz + q_t
$$
with
\begin{gather}
\label{eq-sym} 
(AQ_s)^{\bft}=AQ_s \quad \text{ and 
 } \quad \ell_t=\overline{Aq_t}+\overline{b}-Q_t^*\overline{b}.
\end{gather}
Since the family $(\varphi_t)_{t \geq 0}$ is a semiflow, we have
\begin{gather}
\label{eq-sf}   
Q_0=I, \qquad  
    q_0=0, \qquad Q_{t+s}=Q_tQ_s, \qquad q_{t+s}=Q_tq_s+q_t.
\end{gather} 
Using this and \cite[Chapter I,Theorem 2.9]{zbMATH01354832}, we have $Q_t = e^{t\Omega}$
, where $\Omega$ is a $d \times d$ matrix. Substituting this form back into \eqref{eq-sym}, we observe $e^{s\Omega^{\bft}}A^{\bft}=Ae^{s\Omega}$, which implies, after differentiating at $s=0$, that $\Omega^{\bft}A^{\bft}=A\Omega$, i.e. $(A\Omega)^{\bft}=A\Omega$.

As mentioned in Remark \ref{rem-diff-semi}, the function $\vap_t(z)$ is differentiable in $t$, and so is $q_t$. Since $q_0 = 0$, equation $q_{t+s} = Q_t q_s + q_t$ can be rewritten as follows
\begin{gather*}
    \frac{q_{t+s}-q_t}{s}=Q_t\left(\frac{q_s-q_0}{s}\right).
\end{gather*}
Letting $s\to 0^+$, the line above gives $q_t'=Q_tq_0'$, and so get 
$$
q_t = \int_0^t e^{s\Omega}q_{\diamond} ds.
$$

Since the family $(\psi_t)_{t \geq 0}$ is a semicocycle, by \eqref{eq-csg}, its coefficients satisfy
\begin{gather}
\label{eq-sco}
\ell_0=0, \qquad \tha_0=1,\qquad \tha_{t+s}=\tha_t\tha_s e^{\inner{q_t}{\ell_s}},\qquad 
\ell_{t+s}=Q_t^*\ell_s+\ell_t.
\end{gather}
Furthermore, by Definition \ref{defn-xi} the mapping $t \mapsto \psi_t(z)$ is differentiable $[0, \infty)$ for every $z \in \C^d$, and so $\tha_t,\ell_t$ are, too. Similarly as above, the equation $\ell_{t+s} = Q^*_t \ell_s + \ell_t$ under the condition $\ell_0 = 0$ gives
$$
\ell_t = \int_0^t e^{s\Omega^*}\ell_{\diamond}\; ds.
$$
Using this and differentiating the second equation in \eqref{eq-sym} two times at $t=0$, we get the second condition in  \eqref{eq-cd-csg}.

Next, using $\theta_0 = 1$, we can rewrite $\theta_{t+s} = \theta_t \theta_s e^{\langle q_t, \ell_s \rangle}$ as
\begin{gather*}
    \frac{\tha_{t+s}-\tha_t}{s}=\tha_t\left[e^{\inner{q_t}{\ell_s}}\frac{\tha_s-1}{s}+\frac{e^{\inner{q_t}{\ell_s}}-1}{s}\right].
\end{gather*}
Letting $s\to 0^+$, the line above gives $\tha_t'=\tha_t[\tha_0'+\inner{q_t}{\ell_0'}]$, and so get 
$$
\theta_t = e^{\theta_{\diamond} t + \int_0^t \langle q_s, \ell_{\diamond} \rangle \; ds}.
$$

(2) Conversely, take the symbols of the form \eqref{eq-csg} with coefficients of the forms \eqref{eq-co-csg} and the conditions \eqref{eq-cd-csg} so that $C_{\psi_t, \varphi_t} = C_{\psi_t, \varphi_t,\max}$ for every $t \geq 0$. Firstly, by \eqref{eq-cj} and induction on $n$, conditions \eqref{eq-cd-csg} give
\begin{gather*}
\left(\Omega^*\right)^n=\overline{A\Omega^n}A, \ \text{ i.e. }  \left(A \Omega^n\right)^{\bft}=A\Omega^n,\\  
\left(\Omega^*\right)^n\ell_\diamond=\overline{A\Omega^n q_\diamond}-(\Omega^*)^{n+1}\overline{b}.
\end{gather*}
Since the matrix exponential takes the form $e^{tZ}=\sum\limits_{n=0}^{\infty}\frac{t^nZ^n}{n!}$, equalities above yield conditions \eqref{eq-sym}, which together with \eqref{eq-csg}, by Theorem \ref{thm-cs} imply that  the operator $C_{\psi_t,\varphi_t,\max}$ is $\cj$-selfadjoint for every $t\geq 0$.

It remains to prove the family $C_{\Psi, \Phi}$ is a $C_0$-semigroup by showing that the function $K_z\in D(C_{\Psi,\Phi}).$ To do this, we use conditions \eqref{eq-sf} and \eqref{eq-sco} which follow from \eqref{eq-co-csg} and the fact that $K_z\in\text{dom}(C_{\psi_t,\varphi_t,\max})$ and, by \eqref{eq-ekz},
\begin{gather*}
    C_{\psi_t,\varphi_t,\max}K_z=\tha_te^{\inner{q_t}{z}}K_{Q_t^*z+\ell_t}.
\end{gather*}
Indeed, for every $t, s \geq 0$, using these conditions we get
\begin{align*}
 C_{\psi_t,\varphi_t,\max}C_{\psi_s,\varphi_s,\max}K_z &  = C_{\psi_t,\varphi_t,\max} \left(\tha_se^{\inner{q_s}{z}}K_{Q_s^*z+\ell_s}\right) \\
& =  \; \tha_s e^{\inner{q_s}{z}} \tha_t e^{\inner{q_t}{Q_s^*z+\ell_s}} K_{Q_t^*(Q_s^*z+\ell_s) + \ell_t} = C_{\psi_{t + s},\varphi_{t + s},\max}K_z,
\end{align*}
which implies axioms (A1) and (A2). For the remaining axiom, we consider
\begin{gather*}
    \|C_{\psi_t,\varphi_t,\max}K_z-C_{\psi_s,\varphi_s,\max}K_z\|^2
    =\|\tha_te^{\inner{q_t}{z}}K_{Q_t^*z+\ell_t}-\tha_se^{\inner{q_s}{z}}K_{Q_s^*z+\ell_s}\|^2\\
    =\|\tha_te^{\inner{q_t}{z}}K_{Q_t^*z+\ell_t}\|^2+\|\tha_se^{\inner{q_s}{z}}K_{Q_s^*z+\ell_s}\|^2
    -2\re\left(\tha_t\overline{\tha_s}e^{\inner{q_t}{z}+\inner{z}{q_s}}\inner{K_{Q_t^*z+\ell_t}}{K_{Q_s^*z+\ell_s}}\right)\\
    =|\tha_te^{\inner{q_t}{z}}|^2e^{|Q_t^*z+\ell_t|^2}+|\tha_se^{\inner{q_s}{z}}|^2e^{|Q_s^*z+\ell_s|^2}
    -2\re\left(\tha_t\overline{\tha_s}e^{\inner{q_t}{z}+\inner{z}{q_s}}K_{Q_t^*z+\ell_t}(Q_s^*z+\ell_s)\right).
\end{gather*}
Thus, letting $t\to s$, we observe that the function $K_z$ satisfies the axiom (A3).

(3) Now we study the generator of the $\cj$-selfadjoint semigroup $(C_{\Psi,\Phi}(t))_{t \geq 0}$.
We define the operator $W:\text{dom}(W)\subseteq\spc\to\spc$ by setting
\begin{gather*}
\text{dom}(W):=\bigcup_{\omega\in\R}\big\{h\in\Sigma_\omega:(\tha_\diamond+\inner{z}{\ell_\diamond})h(z)+\inner{\nabla h(z)}{\Omega z+q_\diamond}\in\Sigma_\omega \big \},\\
    Wh(z):=(\tha_\diamond+\inner{z}{\ell_\diamond})h(z)+\inner{\nabla h(z)}{\Omega z+q_\diamond},    
\end{gather*}
and denote by $W^{\omega}$ the restriction of $W$ to each $\Sigma_{\omega}$.

For $f\in\text{dom}(G^\omega)$, by Definition \ref{defn202307162040}, we have
\begin{gather*}
    \lim\limits_{t\to 0^+}\left\|\frac{C_{\psi_t,\varphi_t}f-f}{t}-G^\omega f\right\|=0,
\end{gather*}
which implies, as convergence in $\spc$-norm implies a point convergence, that
\begin{gather*}
    \lim\limits_{t\to 0^+}\left(\frac{C_{\psi_t,\varphi_t}f(z)-f(z)}{t}-G^\omega f(z)\right)=0\quad \text{ for each } z\in\C^d.
\end{gather*}
Consequently, by \eqref{eq-sf} and \eqref{eq-sco}, 
\begin{gather*}
    G^\omega f(z)=(C_{\psi,\varphi}(t)f(z))'|_{t=0}=(\tha_\diamond+\inner{z}{\ell_\diamond})f(z)+\inner{\nabla f(z)}{\Omega z+q_\diamond},
\end{gather*}
which gives $G^\omega\preceq W^\omega$. 

To prove the inverse inclusion, by using of \cite[Lemma 1.3]{zbMATH06046473} it is sufficient to prove that the operator $G^\omega - \delta I$ is onto from $\text{dom}(G^\omega)$ to $\spc$ and the operator $W^\omega - \delta I$ is one-to-one from $\text{dom}(W^\omega)$ to $\spc$ for some number $\delta$. To do this, we denote
\begin{gather*}
    \mathscr D_\omega:=\{f\in\Sigma_\omega:\|C_{\psi_t,\varphi_t}f-f\|_\omega\to 0\,\ \text{ as } \ t\to 0^+\}.
\end{gather*}
It was proven in \cite[Theorem 2.20]{zbMATH03560218} that the restriction of $(C_{\psi_t,\varphi_t})_{t \geq 0}$ on $\mathscr D_\omega$ is a semigroup in the bounded sense. By \cite[Proposition I.5.5, Generation Theorem II.3.8]{zbMATH01354832}, we have
\begin{gather*}
\|C_{\psi_t,\varphi_t}f\|_\omega\leq pe^{t\delta_0}\|f\|_\omega\quad \text{ for all } t\geq 0 \text{ and }  f\in\mathscr{D}_\omega
\end{gather*}
and $(\delta_0,+\infty)\subset\rho(G^\omega)$, where $\rho(G^\omega)$ is the resolvent of $G^\omega$. Let $\delta > \delta_0.$ It follows from \cite[Chapter II, Theorem 1.10]{zbMATH01354832} that $\delta\in\rho(G^\omega)$; meaning that the operator $G^\omega-\delta I$ is onto from $\text{dom}(G^\omega)$ to $\spc$. Next, we prove that $W^\omega-\delta I$ is one-to-one from $\text{dom}(W^\omega)$ to $\spc$. Let $f\in\text{dom}(W^\omega)$ such that
\begin{gather*}
    0=(W^\omega-\delta I)f(z)=(\tha_\diamond+\inner{z}{\ell_\diamond}-\delta)f(z)+\inner{\nabla f(z)}{\Omega z+q_\diamond}.
\end{gather*}
Letting $z=\vap_t(x)=Q_tx+q_t$, where $x\in\C^d$ is chosen arbitrarily, the line above becomes
\begin{gather}\label{202310102112}
    (\tha_\diamond-\delta+\inner{\vap_t(x)}{\ell_\diamond})f(\vap_t(x))+\inner{\nabla f(\vap_t(x))}{\Omega Q_tx+\Omega q_t+q_\diamond}=0.
\end{gather}
Note that
\begin{gather}\label{202310102111}
    \frac{d}{dt}[\vap_t(x)]=\Omega Q_tx+e^{t\Omega}q_\diamond
    =\Omega Q_tx+\Omega q_t+q_\diamond,
\end{gather}
where the second equality uses \cite[Chapter II, Lemma 1.3]{zbMATH01354832}. Through substituting \eqref{202310102111} back into \eqref{202310102112} and then setting $v(t)=f(\vap_t(x))$, we get
\begin{gather*}
(\tha_\diamond-\delta+\inner{\vap_t(x)}{\ell_\diamond})v(t)+v'(t)=0,
\end{gather*}
which has the solution
\begin{gather*}
    v(t)=v(0)\exp\left(-\int\limits_0^t(\tha_\diamond-\delta+\inner{\vap_s(x)}{\ell_\diamond})\,ds\right).
\end{gather*}
Consequently, taking into account the form of $v(t)$ and using \eqref{eq-co-csg}, we have
\begin{align*}
f(\vap_t(x)) & =v(t)=f(x)\exp\left((\delta-\tha_\diamond)t-\int\limits_0^t\inner{Q_sx+q_s}{\ell_\diamond}\,ds\right)\\
& =f(x)\frac{1}{\tha_t}\exp\left(\delta t-\int\limits_0^t\inner{Q_sx}{\ell_\diamond}\,ds\right)
=f(x)\frac{1}{\tha_t}\exp\left(\delta t-\inner{x}{\ell_t}\right),
\end{align*}
which by \eqref{eq-csg} yields
\begin{gather*}
\psi_t(x)f(\vap_t(x))=e^{\delta t}f(x)\in\spc.    
\end{gather*}
Since the domain $\text{dom}(C_{\psi_t,\varphi_t})$ is maximal, we infer $f\in\text{dom}(C_{\psi_t,\varphi_t})$ and furthermore
\begin{gather*}
C_{\psi_t,\varphi_t}f(x)=\psi_t(x)f(\vap_t(x))=e^{\delta t}f(x).
\end{gather*}
The line above gives $f\in\mathscr{D}_\omega$. Hence,
\begin{gather*}
    e^{\delta t}\|f\|_\omega=\|C_{\psi_t,\varphi_t}f\|_\omega\leq e^{\delta_0 t}\|f\|_\omega.
\end{gather*}
Since $\delta>\delta_0$, the inequality above yields $f\equiv 0$.

Finally, the $\cj$-symmetry follows directly from \cite[Theorem 7.11]{zbMATH07175347}.
\end{proof}


\section*{Acknowledgement}
This research is funded by Vietnam
National Foundation for Science and Technology Development (NAFOSTED)
under grant number 101.02-2021.24

\nocite{*}
\bibliographystyle{plain}
\bibliography{refs}
\end{document}